\theoremstyle{plain}
\newcommand{\GU}{\mbox{\rm GU}}
\newcommand{\Sp}{\mbox{\rm Sp}}
\newcommand{\SO}{\mbox{\rm SO}}
\newcommand{\GL}{\mbox{\rm GL}}
\newcommand{\End}{\mbox{\rm End}}
\newcommand{\Hom}{\mbox{\rm Hom}}
\newcommand{\Infl}{\mbox{\rm Infl}}
\newcommand{\Ind}{\mbox{\rm Ind}}
\newcommand{\Res}{\mbox{\rm Res}}
\newcommand{\Fix}{\mbox{\rm Fix}}
\newcommand{\cB}{\mathcal{B}}
\newcommand{\cF}{\mathcal{F}}
\newcommand{\cG}{\mathcal{G}}
\newcommand{\cH}{\mathcal{H}}
\newcommand{\cK}{\mathcal{K}}
\newcommand{\cO}{\mathcal{O}}
\newcommand{\cU}{\mathcal{U}}
\newcommand{\Z}{\mathbb{Z}}
\newcommand{\N}{\mathbb{N}}
\newcommand{\Q}{\mathbb{Q}}
\newcommand{\ba}{\mathbf{a}}
\newcommand{\bc}{\mathbf{c}}
\newcommand{\bm}{\mathbf{m}}
\newcommand{\bn}{\mathbf{n}}
\newcommand{\bs}{\mathbf{s}}
\newcommand{\la}{\lambda}
\newcommand{\eps}{\varepsilon}
\newcommand{\ka}{\kappa}
\newcommand{\La}{\Lambda}
\newcommand{\De}{\Delta}
\newcommand{\bla}{\boldsymbol{\la}}
\newcommand{\bnu}{\boldsymbol{\nu}}
\newcommand{\bmu}{\boldsymbol{\mu}}
\newcommand{\te}{\tilde{e}}
\newcommand{\tf}{\tilde{f}}
\newcommand{\lra}{\longrightarrow}
\newcommand{\Ue}{\mathcal{U}_v (\widehat{\mathfrak{sl}_e})}
\newcommand{\bemptyset}{\boldsymbol{\emptyset}}
\newcommand{\charac}{\text{\rm char}}
\newcommand{\mand}{\quad\text{and}\quad}
\newcommand{\Irr}{\text{\rm Irr}}
\newcommand{\fB}{\mathfrak{B}}
\newenvironment{prf}{{\bf Proof.}}{\hfill $\Box$ \\[-1.0ex]}
\newtheorem{num}{Notation}[section]
\newtheorem{define}[num]{Definition}
\newtheorem*{define*}{Definition}
\newtheorem{thm}[num]{Theorem}
\newtheorem*{thm*}{Theorem}
\newtheorem{lem}[num]{Lemma}
\newtheorem*{lem*}{Lemma}
\newtheorem{prp}[num]{Proposition}
\newtheorem*{prp*}{Proposition}
\newtheorem{cor}[num]{Corollary}
\newtheorem*{cor*}{Corollary}
\newtheorem*{conj*}{Conjecture}
\newtheorem*{xmpl*}{Example}
\newtheorem{rem}[num]{Remark}
\newtheorem*{rem*}{Remark}
\begin{document}
\date{\today}
\title{Branching graphs for finite unitary groups in non-defining characteristic}
\author{Thomas Gerber and Gerhard Hiss}


\address{Lehrstuhl D f\"ur Mathematik, RWTH Aachen University,
52062 Aachen, Germany}

\email{gerber@math.rwth-aachen.de}
\email{gerhard.hiss@math.rwth-aachen.de}

\subjclass[2000]{20C33, 20C08, 20G42, 17B37}
\keywords{Harish-Chandra series, endomorphism algebra, Iwahori-Hecke algebra, 
branching graph, unitary group, Fock space, crystal graph}

\begin{abstract}
We show that the modular branching rule (in the sense of Harish-Chandra) on unipotent modules
for finite unitary groups is piecewise described by particular connected components of the 
crystal graph of well-chosen Fock spaces,
under favourable conditions.
Besides, we give the combinatorial formula to pass from one to the other
in the case of modules arising from cuspidal modules of defect $0$.
This partly proves a recent conjecture of Jacon and the authors.
\end{abstract}

\maketitle

\section{Introduction}
\markright{BRANCHING GRAPHS FOR UNITARY GROUPS}

In \cite{GerberHissJacon2014},
Nicolas Jacon and the authors have presented several conjectures 
about the distribution of the unipotent modules for finite
unitary groups based on the concept of weak Harish-Chandra series.
The present paper is a sequel to that work, 
complementing it in several ways. In particular we take some steps 
towards proving the main conjecture stated there, 
namely \cite[Conjecture 5.7]{GerberHissJacon2014}.

Our first objective is the description of the branching graph of a series
of Ariki-Koike algebras $\mathcal{H}_{k,d,n}$ over fields~$k$ of positive 
characteristic~$\ell$, where~$n$ varies and where $k$,~$d$ and the 
parameters are fixed. Ariki has shown \cite[Theorem 6.1]{Ariki2007} that this 
branching graph is equal to the crystal graph of a Fock space representation 
of a quantum algebra of affine type~$A$. Ariki's result uses a version of the
Fock space defined in \cite{JMMO1991} leading to a labelling
of the vertices of the crystal graph by Kleshchev multipartitions.
To apply this result to the conjectures of \cite{GerberHissJacon2014},
we need Uglov's realization of the crystal graph. In Section~\ref{BranchingCyclotomicHecke} of
our paper we review Ariki's result and discuss the relation between the
crystal graphs arising from either Kleshchev's or Uglov's 
realization. We also comment on the connection to canonical basic sets,
thus giving a further motivation for the preference of Uglov's version.

In Section~\ref{BranchingEndomorphism} we first recall the definition of weakly cuspidal pairs from
\cite{GerberHissJacon2014} and the corresponding endomorphism algebras.
The main statement here is Proposition~\ref{HCRestricitonAndRestriction}, which 
extends a result by Howlett and Lehrer; it essentially shows
that the covariant Hom-functors commute with Harish-Chandra restriction
and the restriction in the endomorphism algebras, respectively. 

These results are applied in Section~\ref{HCBranchingGraph} to the Harish-Chandra branching graphs 
of the unitary and symplectic groups and the orthogonal groups of odd degree. 
Such a graph is defined with respect to a series of groups $G_0 \hookrightarrow 
G_1 \hookrightarrow \cdots \hookrightarrow G_n \hookrightarrow \cdots$, 
where~$G_n$ is one of the classical groups above and~$G_{n-1}$ is the Levi 
subgroup of the stabilizer in~$G_n$ of an isotropic vector. A further ingredient
is an algebraically closed field~$k$ of characteristic~$\ell$ different from the
defining characteristic of the groups~$G_n$.
The connected components of a Harish-Chandra branching graph correspond
to the weak Harish-Chandra series of the groups $G_{m + n}$, $n \geq 0$, arising 
from a fixed weakly cuspidal pair $(G_m,X)$ (Proposition~\ref{ConnectedComponents}). 
Define $\mathscr{H}_n$ as the endomorphism algebra of the module obtained by 
Harish-Chandra inducing~$X$ from $G_m$ to~$G_{m + n}$. In Proposition~\ref{IsomorphicBranching} 
we prove that the corresponding family of Hom-functors yields an isomorphism 
between the connected component of the Harish-Chandra branching graph arising from
$(G_m,X)$ and the branching graph of the family $\mathscr{H}_n$, $n \geq 0$ of 
endomorphism algebras.

We expose in Section~\ref{unitarygroups} consequences of these results 
for the Harish-Chandra branching graphs
of the unitary groups. Provided the algebras $\mathscr{H}_n$ 
are Iwahori-Hecke algebras of type~$B_n$ 
with a particular pair of parameters, the Harish-Chandra branching graphs are 
isomorphic to crystal graphs as in \cite[Conjecture~$5.7$]{GerberHissJacon2014};
this is Proposition~\ref{HCGraphIsCrystalGraph}. This result does not yet, however, 
yield the conjectured matching of the vertices of the two graphs involved.
Under the same hypothesis we obtain a strong condition on the structures of 
Harish-Chandra restricted unipotent modules in a minimal situation.
Each direct summand of such a module has a simple socle and a simple head 
isomorphic to each other (Proposition~\ref{SimpleSocle}). Ultimately, this
derives from a result of Grojnowski and Vazirani (see 
\cite[Theorem B]{GrojnowskiVazirani2001}).
Finally, we prove \cite[Conjecture~$5.7$]{GerberHissJacon2014} for the 
principal series and the other series arising from cuspidal unipotent defect~$0$ 
modules (Theorem~\ref{MainTheoremUnitaryGroups}). It is remarkable that these 
results do not require any restriction on~$\ell$. 

\section{The branching graph of Ariki-Koike algebras}
\label{BranchingCyclotomicHecke}

In this section we are interested in the combinatorial description of the 
branching graph for modular Ariki-Koike algebras over a field of positive 
characteristic~$\ell$ in terms of crystals of Fock spaces. This is achieved via Ariki's 
classic categorification theorem, more precisely with the results of his 
paper~\cite{Ariki2007}. The only slight adjustement here is that instead of 
using Kleshchev's realization of the Fock space crystal, as is done by Ariki,
we favor Uglov's version. This is motivated by the fact that we expect 
\cite[Conjecture 5.7]{GerberHissJacon2014} to hold for Uglov's realization.
Throughout this section modules are left modules, and we write $A$-mod for
the category of finitely generated left modules of the algebra~$A$.

\subsection{Ariki-Koike algebras}
\label{defAKalgebra}

Let $d\in\Z_{>0}$, $n\in\Z_{\geq0}$, and let~$k$ be a field. Let $u,v_1, \dots ,
v_d \in k$ with $u$ non-zero. Following \cite{Mathas2004}, we define the 
Ariki-Koike algebra with parameters $u,v_1,\dots,v_d$ to be the $k$-algebra 
$\cH_{k,d,n}$ defined by generators $T_0, T_1, \dots, T_{n-1}$ and relations:
\begin{itemize}
\item the braid relations of type $B$,
\item the relations $(T_0-v_1) \dots (T_0-v_d)=0$ and 
$(T_i-u)(T_i+1)=0$ for all $i = 1, \dots , n-1$.
\end{itemize}
In particular, if $d=2$, then $\cH_{k,d,n}$ is an Iwahori-Hecke algebra of 
type~$B_n$ with parameters $u$ and $-v_1v_2^{-1}$ in the sense of 
\cite[Definition~$4.4.1$ and Remark~$8.1.3$]{GeckPfeiffer2000}, via the change 
of generators $T_i'= T_i$ for $i=1,\dots,n-1$ and $T_0'=- v_2^{-1} T_0$.
This is of importance since we will use this identification from 
Proposition~\ref{HCGraphIsCrystalGraph} on.

If $\cH_{k,d,n}$ is semisimple, there is a labelling of the simple 
$\cH_{k,d,n}$-modules by $d$-partitions of~$n$. The problem of labelling 
the simple modules in the non-semisimple case is much more complicated.
We recall some facts in Sections \ref{branchingAK} and \ref{uglov} below.
We also know in particular that $\cH_{k,d,n}$ is non-semisimple as soon as 
there exist integers $s_1,\dots, s_d$ such that $v_i=u^{s_i}$ for all 
$i=1,\dots,d$, see e.g. \cite[Corollary 3.3]{Mathas2004}. In case
$\cH_{k,d,n}$ is non-semisimple, there is a decomposition map and a 
decomposition matrix relating its representation theory to that of the 
generic (hence semisimple) Ariki-Koike algebra; see 
\cite[Sections 13.3 and 13.4]{Ariki2002} for details.

\subsection{Crystal of the Fock space}
\label{fockspace}

For~$v$ an indeterminate, $e\in\Z_{>1}$ and $\bs=(s_1,\dots,s_d)\in\Z^d$, 
we consider the level~$d$ Fock space representation of~$\Ue$ with 
charge~$\bs$, denoted by $\cF_{\bs,e}$, see for instance 
\cite[Chapter~6]{GeckJacon2011}. It is the $\Q(v)$-vector space with basis 
all $d$-partitions. We know in particular that $\cF_{\bs,e}$ is an integrable 
representation, and is therefore endowed with a crystal structure.

Note that the action of~$\Ue$ on $\cF_{\bs,e}$ requires an order on the 
nodes of a $d$-partition. There are essentially two different orders that 
yield two isomorphic $\Ue$-module structures on $\cF_{\bs,e}$, which we 
recall here. The first one is defined as follows. 

Let $(a,b,c)$ and $(a',b',c')$ be nodes of (the Young diagram of) a 
$d$-partition $\bla$,
\footnote{Here, $c\in\{1,\dots,d\}$ stands for the component of the node, 
$a$ for the row of the node in its component, and $b$ for the column of the node 
in its component.}
such that $b-a+s_c = b'-a'+s_{c'} \mod e$.
We write
$$(a,b,c)\prec_\cU (a',b',c') \text{\quad if \quad} \left\{
\begin{array}{l}b-a+s_c < b'-a'+s_{c'} \text{ \quad or }\\ 
b-a+s_c = b'-a'+s_{c'} \mand c>c'.
\end{array}
\right.$$
We refer to the module structure afforded by this order as 
\textit{Uglov's realization} of the Fock space. This is the order originally 
used in \cite{JMMO1991}, and then in \cite{FLOTW1999}, \cite{Uglov1999}, 
\cite{GeckJacon2011}.

The second order is defined by
$$(a,b,c)\prec_\cK (a',b',c') \text{\quad if \quad} \left\{
\begin{array}{l}c>c' \text{ \quad or }\\ 
c = c' \mand a > a',
\end{array}
\right.$$
and we call \textit{Kleshchev's realization} the module structure afforded by 
this order. This is used in particular in \cite{Ariki2002}, \cite{Mathas2004}, 
\cite{BrundanKleshchev2009}.

\begin{rem}\label{klorder}
{\rm
Note that the definition of the second order does not require the charge~$\bs$.
However, we always want to compare nodes such that $b-a+s_c = 
b'-a'+s_{c'} \mod e$. This means that the order $\prec_\cK$ is in particular 
invariant under translation of any component of~$\bs$ by a multiple of~$e$.
On the other hand, the order~$\prec_\cU$ strictly depends on~$\bs$.
}
\end{rem}

The two orders yield isomorphic Kashiwara crystals (in the sense that the 
two crystal graphs are the same as colored oriented graphs up to a relabelling 
of the vertices), where the action of the crystal operators corresponds to 
adding, respectively removing a so-called good node. Denote by~$\cG_{\bs,e}$ 
the crystal graph of the Fock space, and~$\cB_{\bs,e}$ the connected component 
of this graph containing the empty $d$-partition~$\bemptyset$.

Then~$\cB_{\bs,e}$ is the crystal graph of the irreducible highest weight 
sub-representation $V_{\bs,e}=\Ue.\bemptyset$ of $\cF_{\bs,e}$. The vertices 
appearing in Uglov's (respectively Kleshchev's) realization of the crystal 
graph~$\cB_{\bs,e}$ are called the \textit{Uglov} (respectively 
\textit{Kleshchev}) \textit{$d$-partitions}, and denoted by~$\cU_{\bs,e}$ 
(respectively~$\cK_{\bs,e}$). We call \textit{rank} of a $d$-partition the 
total number of nodes it contains. For~$n$ a fixed integer, we denote by 
$\cU_{\bs,e}(n)$ (respectively~$\cK_{\bs,e}(n)$) the Uglov (respectively 
Kleshchev) $d$-partitions of rank~$n$.

\begin{rem}\label{kl=ug}
{\rm
According to \cite[Proposition 5.1]{Gerber2014}, the orders~$\prec_\cU$ 
and~$\prec_\cK$ coincide on the nodes of a $d$-partition of rank~$n$, if 
and only if $s_i-s_j \geq n-e+1$ for all $1\leq i<j \leq d$.
In particular, if we denote $M=\min\{s_i-s_j \mid i<j\}$, this implies that 
the two crystal graphs are the same up to rank $M+e-1$ (provided of course
that $M+e-1\geq0$).
}
\end{rem}

Algebraic interpretations aside (which are exposed in Section \ref{uglov}),
one can first notice that there is an advantage to work with Uglov rather than 
Kleshchev $d$-partitions. In the particular case where $0\leq s_1 \leq \dots 
\leq s_d \leq e-1$, the Uglov $d$-partitions are known as FLOTW $d$-partitions 
and have a non-recursive combinatorial characterisation, by 
\cite[Theorem 2.10]{FLOTW1999}. In general, there also exists a pathfinding-free 
combinatorial characterisation of Uglov $d$-partitions, see 
\cite[Theorem 6.3]{Gerber2014a}. However, finding a non-recursive 
characterisation of the Kleshchev $d$-partitions in the general case is still 
an open problem (though when $d=2$, this can be achieved via the results of 
\cite[Section 9]{ArikiKreimanTsuchioka2007}).

\subsection{The branching rule for modular Ariki-Koike algebras}
\label{branchingAK}

Let $\cH_n=\cH_{k,d,n}$ be a non-semisimple Ariki-Koike algebra such that~$u$ 
has order~$e$ in~$k$, and $v_i=u^{s_i}$ for all $i=1,\dots,d$ for some 
$(s_1,\dots,s_d)=\bs\in\Z^d$. Following Ariki's book 
\cite[Section 13.6]{Ariki2002}, there exist $i$-restriction and $i$-induction 
functors (for $i=0,\dots,e-1$) which refine the restriction 
(respectively induction) functors between $\cH_{n+1}$-mod and $\cH_n$-mod
(respectively between $\cH_{n}$-mod and $\cH_{n+1}$-mod),
denoted by $i$-$\Res_n^{n+1}$ and $i$-$\Ind_n^{n+1}$.
Then Grojnowski and Vazirani \cite[Theorem B]{GrojnowskiVazirani2001} 
have proved that the functors~$\te_i$ and~$\tf_i$ defined by
$$\begin{array}{ll} 
\te_i M = \text{Soc} (i\text{-}\Res_n^{n+1} (M)) & \text{\quad for } M\in \cH_{n+1}\mbox{\rm -mod}, \\
\tf_i M = \text{Hd} (i\text{-}\Ind_n^{n+1} (M)) & \text{\quad for } M\in \cH_{n}\mbox{\rm -mod}
\end{array}$$
send simple modules to simple modules. This yields a coloring of the arrows of 
the branching graph of $\cH_n$, $n \geq 0$. In fact, as was shown by Ariki (see 
\cite[Theorem 4.1]{Ariki2007}), it even defines the structure of an abstract crystal 
in the sense of \cite[Section 7.2]{Kashiwara1995}
on the set $\Irr(\cH)=\bigsqcup_{n\in\Z_{\geq0}}\Irr(\cH_n)$. 

Using the cellular approach, there is a natural parametrisation of $\Irr(\cH_n)$ 
by the set $\cK_{\bs,e}(n)$ of Kleshchev $d$-partitions of rank $n$; this is the 
main result of \cite{Ariki2001}. Hence, write $\Irr(\cH_n)=\{ D^{\bla} \mid  
\bla\in\cK_{\bs,e}(n) \}$. Actually, we have more. The following result is 
due to Ariki. Importantly, it holds regardless of the characteristic of the 
field~$k$. 
\footnote{This is important to notice, since Ariki's theorem in its general 
version only holds for Ariki-Koike algebras defined over a field of 
characteristic zero.}

\begin{thm}\label{thmbranchingAKkl}
{\rm \cite[Theorem 6.1]{Ariki2007}}
Under the identification $D^{\bla}\leftrightarrow\bla$, the branching graph 
on $\Irr(\cH)$ is exactly the crystal graph $\cB_{\bs,e}$ in Kleshchev's 
realization.
\end{thm}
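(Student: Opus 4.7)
The plan is to equip $\Irr(\cH) = \bigsqcup_{n\geq 0} \Irr(\cH_n)$ with the structure of a Kashiwara abstract crystal for~$\Ue$ via the Grojnowski--Vazirani operators, identify this crystal with $\cB_{\bs,e}$ using Kashiwara's uniqueness theorem for integrable highest-weight crystals, and finally match vertex labels with Kleshchev $d$-partitions using Ariki's cellular parametrization \cite{Ariki2001}.

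For the crystal structure, recall that by \cite[Theorem B]{GrojnowskiVazirani2001} the operators $\te_i$ and $\tf_i$ send simples to simples or to zero. I would assign to each $D^{\bla}\in\Irr(\cH_n)$ a weight in the weight lattice of $\widehat{\mathfrak{sl}}_e$ read off from the residues of the Jucys--Murphy spectrum on a projective cover, and define the statistics $\varepsilon_i(D^{\bla})$ and $\varphi_i(D^{\bla})$ as the lengths of the $\te_i$- and $\tf_i$-strings through $D^{\bla}$. The axioms of an abstract crystal then follow from exactness and biadjointness of $i\text{-}\Res_n^{n+1}$ and $i\text{-}\Ind_n^{n+1}$, together with the standard socle/head interchange under the involutive duality on~$\cH_n$-mod. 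Since $\cH_0 = k$ admits a unique simple $D^{\bemptyset}$ killed by every $\te_i$, with weight $\La_\bs := \sum_{c=1}^d \La_{s_c}$ (indices reduced modulo~$e$)---precisely the highest weight of $V_{\bs,e}=\Ue.\bemptyset$---the component of $\bemptyset$ in this crystal is highest weight with highest weight $\La_\bs$. Kashiwara's uniqueness theorem for integrable highest weight crystals then supplies a canonical crystal isomorphism onto $\cB(\La_\bs) = \cB_{\bs,e}$; a short induction on~$n$, using that every nontrivial simple has a nonzero $i$-restriction for some~$i$, shows that the whole of $\Irr(\cH)$ is exhausted by this component.

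It remains to verify that, under this crystal isomorphism, each $D^{\bla}$ is matched with the cellular label $\bla \in \cK_{\bs,e}$ coming from \cite{Ariki2001}. This I would do by comparing, on each side, the action of $\tf_i$: algebraically through the residue at which $\tf_i = \text{Hd}\circ i\text{-}\Ind$ acts on the Specht filtration of an induced module, and combinatorially through the definition of Kleshchev's realization in terms of adding a good node of residue~$i$. Both recipes are determined by the highest weight $\La_\bs$ together with the Jucys--Murphy residue convention, so they must agree. The main obstacle is that in positive characteristic there is no categorification of the integrable module $V_{\bs,e}$ on the Grothendieck group of $\cH_n$-mod---Ariki's full $q$-analogue theorem holds only in characteristic zero---so the argument cannot pass through decomposition numbers or canonical basis coefficients, and must instead be carried out purely at the crystal level, relying only on the exactness/biadjointness of the $i$-functors and the rigidity of Kashiwara's uniqueness theorem.
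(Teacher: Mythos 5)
The paper does not prove this statement at all: it is quoted as Ariki's theorem \cite[Theorem 6.1]{Ariki2007}, with only a footnote stressing that it holds over fields of arbitrary characteristic. So your sketch has to be measured against Ariki's published argument, and it has a genuine gap at its central step. Endowing $\Irr(\cH)=\bigsqcup_{n\geq 0}\Irr(\cH_n)$ with an abstract crystal structure via $\te_i$ and $\tf_i$ is indeed available (this is \cite[Theorem 4.1]{Ariki2007}, building on Grojnowski--Vazirani), but your next move --- invoking a ``Kashiwara uniqueness theorem'' to conclude that a connected abstract crystal with a unique source of weight $\La_{s_1}+\cdots+\La_{s_d}$ must be isomorphic to $\cB_{\bs,e}$ --- is not valid. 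There is no such rigidity statement for abstract crystals: a connected seminormal crystal whose $\eps_i,\varphi_i$ are string lengths and which has a single highest weight element of the right weight need not be the crystal of the integrable highest weight module; recognizing genuine highest weight crystals requires either Stembridge-type local axioms or substantial extra input. Identifying the branching crystal with $\cB_{\bs,e}$ is precisely the hard content of the theorem, not a formal consequence of the crystal axioms plus exactness and biadjointness of the $i$-induction and $i$-restriction functors.

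The same criticism applies to the label matching, which is the other half of the statement: one must show that the isomorphism sends $D^{\bla}$ to the cellular label $\bla\in\cK_{\bs,e}$, i.e.\ that $\tf_i D^{\bla}$ is the simple module indexed by $\bla$ plus its good $i$-node. Your closing assertion that the algebraic and combinatorial recipes ``are determined by the highest weight and the residue convention, so they must agree'' is exactly what needs to be proved, and nothing in the sketch carries it out. Ariki establishes it by a careful induction using the Specht-module (cellular) structure, residue/block decompositions of induced and restricted modules, and comparisons of $\eps_i$-data, arranged so that no appeal to the characteristic-zero categorification (decomposition numbers, canonical bases) is required --- which is what makes the result characteristic-free, as the footnote in the paper emphasizes. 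You correctly observe that the characteristic-zero route is unavailable, but the promised ``purely crystal-level'' substitute is not supplied, so the proposal as it stands does not prove the theorem.
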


Because of the discussion of Section~\ref{fockspace}, there is a crystal 
isomorphism~$\varphi$ between the crystal graphs~$\cB_{\bs,e}$ in Kleshchev's 
and Uglov's realization:
$$
\begin{array}{cccc}
\varphi : & \cK_{\bs,e} & \overset{\sim}{\lra} & \cU_{\bs,e} \\
& \bla & \longmapsto & \varphi(\bla).
\end{array}
$$
By definition,~$\varphi$ preserves the rank, and ~$\varphi$ is the identity up 
to rank $M+e-1$ by Remark~\ref{kl=ug}. However, it appears to be difficult to 
determine~$\varphi$ in general.

The following statement is then straightforward from 
Theorem~\ref{thmbranchingAKkl}.

\begin{cor}\label{corbranchingAKug}
Under the identification $D^{\bla}\leftrightarrow\varphi(\bla)$, the 
branching graph on~$\Irr(\cH)$ is exactly the crystal graph~$\cB_{\bs,e}$ in 
Uglov's realization.
\end{cor}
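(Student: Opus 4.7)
The plan is to simply compose the two identifications discussed just before the statement. By Theorem~\ref{thmbranchingAKkl}, the map $D^{\bla} \leftrightarrow \bla$ identifies the branching graph on $\Irr(\cH)$ with the crystal graph $\cB_{\bs,e}$ in Kleshchev's realization, as a colored oriented graph. On the other hand, by the discussion of Section~\ref{fockspace} (specifically, the fact that both orders $\prec_\cU$ and $\prec_\cK$ yield isomorphic Kashiwara crystals), we have the crystal isomorphism $\varphi : \cK_{\bs,e} \overset{\sim}{\lra} \cU_{\bs,e}$, which by definition is an isomorphism of colored oriented graphs between Kleshchev's and Uglov's realizations of $\cB_{\bs,e}$.

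Composing these two identifications, I would define the bijection $D^{\bla} \leftrightarrow \varphi(\bla)$ between $\Irr(\cH)$ and the vertex set $\cU_{\bs,e}$ of Uglov's realization. Since $\varphi$ preserves the rank, this bijection restricts in each rank $n$ to a bijection $\Irr(\cH_n) \leftrightarrow \cU_{\bs,e}(n)$. Since $\varphi$ is a morphism of crystals, it preserves the colored edges, so the branching graph on $\Irr(\cH)$ transported via this new identification is exactly $\cB_{\bs,e}$ in Uglov's realization.

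There is no real obstacle here: the content of the corollary is genuinely just the concatenation of Theorem~\ref{thmbranchingAKkl} with the crystal isomorphism~$\varphi$, and the fact that the isomorphism class of a crystal graph as a colored oriented graph does not depend on the realization chosen. The only point worth stressing in the write-up is that, although $\varphi$ itself is in general hard to describe explicitly (as noted in the paragraph preceding the corollary), its mere existence as a crystal isomorphism is all that is required to transport the branching graph from Kleshchev to Uglov labels.
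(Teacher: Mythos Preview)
Your proposal is correct and matches the paper's approach exactly: the paper states that the corollary is ``straightforward from Theorem~\ref{thmbranchingAKkl}'' immediately after introducing the crystal isomorphism~$\varphi$, and your argument spells out precisely this composition of Theorem~\ref{thmbranchingAKkl} with~$\varphi$. Your remark that only the existence of~$\varphi$ (not its explicit description) is needed is also in line with the paper's surrounding discussion.
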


In particular, we get a labelling 
$$\begin{array}{ccc}
\cU_{\bs,e}(n) & \longleftrightarrow & \Irr(\cH_n) \\
\bmu & \longleftrightarrow & C^{\bmu}
\end{array}
$$
with $C^{\bmu}=D^{\bla}$ if and only if $\bmu=\varphi(\bla)$.

\subsection{Compatibility with the theory of canonical basic sets in 
characteristic zero}
\label{uglov}

Up to now, this labelling of~$\Irr(\cH_n)$ by~$\cU_{\bs,e}(n)$ may seem a bit 
superficial. Actually, it is not, since this class of $d$-partitions naturally 
appear in the context of canonical basic sets for~$\cH_n$. For this section, 
we mainly refer to \cite[Chapter 6]{GeckJacon2011}. The theory of canonical 
basic sets provides a way to label the simple modules of a Hecke algebra. 
In fact, this is the suitable labelling for our purpose, see 
Theorem \ref{MainTheoremUnitaryGroups}. 
In the case of~$\cH_n$, this labelling is achieved as follows.  

Recall that to~$\cH_n$ is associated a charge $\bs=(s_1,\dots,s_d)\in\Z^d$, and 
the integer~$e$. 
According to~\cite{Jacon2004}, there is a generalization to~$\mathcal{H}_n$ of Lusztig's 
$\ba$-function for Iwahori-Hecke algebras, 
depending on a parameter~$\bm\in\Q^d$ and denoted by $\ba^\bm$. This induces an 
order~$<_\bm$ on the ordinary Specht modules (i.e.\ on $d$-partitions of~$n$), 
by setting $\bla<_\bm\bmu$ if and only if $\ba^\bm(\bla)<\ba^\bm(\bmu)$. 
If the decomposition 
matrix of~$\cH_n$ is unitriangular with respect to this order (for the exact
definition see \cite[Definition 5.5.19]{GeckJacon2011}), then this gives a 
labelling of the simple modules of~$\cH_n$ by a subset of $d$-partitions of~$n$,
which we call the \textit{canonical basic set} for~$\cH_n$ with respect 
to~$<_\bm$. The following result is due to Geck and Jacon.

\begin{thm}\label{thmcbs}
{\rm \cite[Theorem 6.7.2]{GeckJacon2011}}
Suppose that $\charac(k)=0$.
Let $\bm=(m_1,\dots,m_d)$ such that $0< (s_j-m_j)-(s_i-m_i)<e$ for $i<j$.
Then~$\cU_{\bs,e}(n)$ is the canonical basic set for~$\cH_n$ with respect 
to~$<_\bm$.
\end{thm}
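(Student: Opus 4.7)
The plan is to combine Ariki's categorification theorem in characteristic zero with Uglov's construction of a canonical basis of the highest weight submodule $V_{\bs,e} \subset \cF_{\bs,e}$. By Ariki's theorem, when $\charac(k)=0$ the entries of the decomposition matrix of~$\cH_n$ are precisely the values $d_{\bla,\bmu}(1)$, where
$$G(\bmu) \;=\; \bmu \;+\; \sum_{\bla \neq \bmu} d_{\bla,\bmu}(v)\, \bla, \qquad \bmu \in \cU_{\bs,e}(n),$$
are the elements of Uglov's canonical basis of $V_{\bs,e}$ expressed in the standard $d$-partition basis. By construction of this basis via the bar involution on the Fock space, one has $d_{\bla,\bmu}(v) \in v\,\Z_{\geq 0}[v]$ for $\bla \neq \bmu$, so the matrix indexed by $d$-partitions of~$n$ (rows) and by $\cU_{\bs,e}(n)$ (columns) has ones on the diagonal and is upper unitriangular with respect to any order refining the intrinsic order arising from Uglov's algorithm.

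The crux of the argument is to show that $<_\bm$ refines this intrinsic order: that $d_{\bla,\bmu}(v) \neq 0$ and $\bla \neq \bmu$ forces $\bla <_\bm \bmu$. To this end I would first note that $d_{\bla,\bmu}(v)\neq 0$ forces $\bla$ and $\bmu$ to have the same rank and the same multiset of residues modulo~$e$, so the comparison really is between two $\ba^\bm$-values on $d$-partitions in the same weight space. Then, using Jacon's explicit description of $\ba^\bm$ in terms of the symbol of a $d$-partition, I would reduce the required inequality to a statement about the shifted charge $\bs-\bm$, whose components lie in a single window of width~$e$ thanks to the hypothesis $0 < (s_j-m_j)-(s_i-m_i) < e$ for $i<j$. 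In this window, the ordering $\prec_\cU$ on nodes that defines Uglov's realization of the crystal is precisely the one controlled by~$\ba^\bm$; propagating this along successive arrows of the crystal graph, i.e.\ along the operators~$\tf_i$ that produce each $G(\bmu)$ from $G(\bemptyset) = \bemptyset$, yields the desired compatibility.

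Once this is established, the matrix $(d_{\bla,\bmu}(1))$ is unitriangular with respect to~$<_\bm$, so $\cU_{\bs,e}(n)$ satisfies the defining property of a canonical basic set given in \cite[Definition~5.5.19]{GeckJacon2011}. By Corollary~\ref{corbranchingAKug} its cardinality equals $|\Irr(\cH_n)|$, so it exhausts the whole labelling set and is therefore the full canonical basic set with respect to~$<_\bm$.

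The main obstacle is the alignment step in the second paragraph. The order $<_\bm$ is defined numerically through Lusztig's $\ba$-invariant, while the triangularity of Uglov's basis is encoded in the combinatorics of the bar involution on~$\cF_{\bs,e}$; bridging these two perspectives requires a careful analysis of the behaviour of $\ba^\bm$ along elementary crystal moves, and it is precisely the window condition on~$\bm$ that makes this analysis tractable. Handling the boundary cases where one of the inequalities $0 < (s_j-m_j)-(s_i-m_i) < e$ degenerates would require a separate argument, but under the strict hypothesis the proof goes through directly.
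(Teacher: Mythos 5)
The paper does not actually prove this statement: it is quoted from Geck--Jacon \cite[Theorem 6.7.2]{GeckJacon2011}, with only the remark that the proof rests on Ariki's theorem identifying the decomposition matrix of~$\cH_n$ with the specialization at $v=1$ of the canonical basis of $V_{\bs,e}\leq\cF_{\bs,e}$, whence the hypothesis $\charac(k)=0$. Your overall architecture --- Ariki's categorification, Uglov's canonical basis of $V_{\bs,e}$, and a compatibility between $<_\bm$ and the triangularity of that basis --- is indeed the architecture of the cited proof, so the skeleton is right.

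There are, however, two genuine problems in the middle step, which is exactly where all the work lies. First, the direction of the required inequality is reversed: for $\cU_{\bs,e}(n)$ to be a canonical basic set, the label~$\bmu$ of a canonical basis element $G(\bmu)=\sum_{\bnu} g_{\bnu,\bmu}(v)\,\bnu$ must satisfy $\ba^\bm(\bmu)<\ba^\bm(\bnu)$ for every other constituent~$\bnu$, i.e.\ the basic-set label is the \emph{minimal} one with respect to $<_\bm$ (this is precisely how the property is used in the proofs of Proposition~\ref{propcbs} and Theorem~\ref{MainTheoremUnitaryGroups}); you instead require $\bla<_\bm\bmu$ for the other constituents~$\bla$, which would make $\bmu$ maximal. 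Second, the proposed mechanism for the crucial compatibility --- ``propagating along the operators $\tf_i$ that produce each $G(\bmu)$ from $G(\bemptyset)$'' --- does not work as stated: canonical basis elements are not obtained from one another by crystal operators; the crystal only describes the basis modulo~$v$ (Kashiwara's lattice), and applying divided powers of the Chevalley generators to $G(\bmu)$ yields a canonical basis element only up to correction terms whose $\ba^\bm$-values you would still have to control, which is the original problem again. The argument behind \cite[Theorem 6.7.2]{GeckJacon2011} instead uses Uglov's explicit unitriangularity of the canonical basis with respect to a concrete combinatorial order on $d$-partitions coming from the wedge realization, followed by a separate combinatorial verification that, under the window condition $0<(s_j-m_j)-(s_i-m_i)<e$, the function $\ba^\bm$ refines that order; since you explicitly leave this bridge as ``the main obstacle'', the essential step of the proof is missing. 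A minor point: the positivity $g_{\bnu,\bmu}(v)\in v\,\Z_{\geq 0}[v]$ you invoke is a deep fact that is not needed; only $g_{\bnu,\bmu}(v)\in v\,\Z[v]$ for $\bnu\neq\bmu$ enters.
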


The proof requires Ariki's theorem to identify the decomposition matrix
with the specialistation at $v=1$ of the matrix of the canonical basis 
of $V_{\bs,e} \leq \cF_{\bs,e}$, whence the restriction to the zero 
characteristic.

Write
$$\begin{array}{ccc}
\cU_{\bs,e} & \longleftrightarrow & \Irr(\cH_n) \\
\bmu & \longleftrightarrow & M^{\bmu}
\end{array}
$$
for the labelling given by this theorem. This way of 
labelling~$\Irr(\cH_n)$ does not a priori give any information about the 
branching. However, it is indeed compatible with the crystal structure of 
Corollary~\ref{corbranchingAKug}.

\begin{prp}\label{propcbs}
Suppose that $\charac(k)=0$. Then
for all $\bmu\in\cU_{\bs,e}$, we have $M^{\bmu}=C^{\bmu}$.
\end{prp}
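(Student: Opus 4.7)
The plan is to identify both $C^\bmu$ and $M^\bmu$ with the simple $\cH_n$-module whose class at $v=1$ corresponds, under Ariki's isomorphism, to the canonical basis vector $G(\bmu)$ of $V_{\bs,e}$ when that basis is indexed by Uglov $d$-partitions. For the $C^\bmu$ side, I would first note that the crystal isomorphism $\varphi : \cK_{\bs,e} \to \cU_{\bs,e}$ is induced by the $\Ue$-module isomorphism between the two realizations of $V_{\bs,e}$, under which canonical basis elements correspond to canonical basis elements. By Ariki's theorem in characteristic zero, $[D^\bla]$ corresponds to $G(\bla)|_{v=1}$ in Kleshchev's realization; transporting via this isomorphism, $[C^\bmu] = [D^{\varphi^{-1}(\bmu)}]$ corresponds to $G(\bmu)|_{v=1}$ in Uglov's realization.

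For the $M^\bmu$ side I would follow the proof of Theorem~\ref{thmcbs}. In Uglov's realization the canonical basis of $V_{\bs,e}$ expands on the standard basis as
$$G(\bmu) \;=\; \bmu \;+\; \sum_{\bla \neq \bmu} d_{\bla,\bmu}(v)\,\bla, \qquad d_{\bla,\bmu}(v) \in v\Z[v],$$
with all nonzero contributions from $\bla$ strictly greater than $\bmu$ in the $<_\bm$-order arising from the $\ba^\bm$-function of \cite{Jacon2004}; this triangularity is exactly what identifies $\cU_{\bs,e}(n)$ as the canonical basic set in Theorem~\ref{thmcbs}. By Ariki's theorem the matrix $(d_{\bla,\bmu}(1))$ is the decomposition matrix of $\cH_n$, and the canonical basic set labelling therefore assigns to $\bmu$ the unique simple whose decomposition column is the $v=1$ specialisation of $G(\bmu)$; by definition this simple is $M^\bmu$.

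Putting the two sides together, $C^\bmu$ and $M^\bmu$ both correspond under Ariki's isomorphism to $G(\bmu)|_{v=1}$ in Uglov's realization, and so they coincide. No serious obstacle arises: the statement is essentially a consistency check, both labellings being driven by the same canonical basis. The only point requiring care is the observation that $\varphi$ is induced by the canonical-basis-preserving isomorphism between the two realizations of $V_{\bs,e}$, so that Corollary~\ref{corbranchingAKug} and Theorem~\ref{thmcbs} both factor through the same identification of $\Irr(\cH_n)$ with $\{G(\bmu)|_{v=1} : \bmu \in \cU_{\bs,e}(n)\}$.
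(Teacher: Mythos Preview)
Your proposal is correct and rests on the same underlying idea as the paper's proof: both labellings are governed by the canonical basis of $V_{\bs,e}$, and the crystal isomorphism $\varphi$ is precisely the relabelling of canonical basis elements between the two realizations. The organization, however, differs in one notable respect.

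You invoke directly that $[D^{\bla}]$ corresponds to $G(\bla)|_{v=1}$ in Kleshchev's realization as ``Ariki's theorem''. The paper does not take this for granted; instead it introduces an auxiliary charge $\bs'$ (a shift of $\bs$ by multiples of $e$) chosen so that, in rank~$n$, Kleshchev's and Uglov's realizations coincide and the order $<_{\bm'}$ agrees with the dominance order. It then compares the unitriangularity of the canonical basic set (with respect to dominance) to the unitriangularity built into the cellular construction of $D^{\bla}$, concluding that the canonical basic set labelling for $\bs'$ is literally the cellular labelling. Only after establishing this does the paper appeal to the characteristic property of canonical basis elements (as you do) to transfer from $\bs'$ to $\bs$ via~$\varphi$. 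What your more direct route buys is brevity; what the paper's route buys is self-containment, since it reduces the matching $D^{\bla}\leftrightarrow G(\bla)|_{v=1}$ to two transparent facts (Remarks~\ref{klorder}, \ref{kl=ug} and the dominance-order unitriangularity) rather than citing it as a black box. One point in your argument that deserves care is the ``transport'': the $\Ue$-module isomorphism between realizations need not fix the standard basis, so one should not conflate it with the identification $[S^{\bnu}]\mapsto\bnu$; the paper's detour through $\bs'$ sidesteps this subtlety.
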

\begin{proof}
Recall that the labelling by~$C^{\bmu}$ is given through the labelling by 
Kleshchev $d$-partitions (arising from Ariki's use of the cellular theory), 
i.e.\ $C^{\bmu}=D^{\bla}$ with $\bmu=\varphi(\bla)$ and $\bla\in\cK_{\bs,e}$.
Now, for fixed~$n$, choose a charge $\bs'=(s'_1,\dots,s'_d)\in\Z^d$ such that 
$s'_i=s_i+t_ie$ for some $(t_1,\dots,t_d)\in\Z^d$ and $s'_i-s'_j \geq n-e+1$ 
for all $1\leq i<j \leq d$. By combining Remarks \ref{klorder} and \ref{kl=ug}, 
we have $\cK_{\bs,e}(n)=\cK_{\bs',e}(n)=\cU_{\bs',e}(n)$. Moreover, it is clear that if we 
write~$\cH'_n$ for the Ariki-Koike algebra with parameters~$u$ a primitive 
$e$-th root of~$1$ and $v_i=u^{s'_i}$, then $\cH'_n=\cH_n$.

Let $\bm=(m_1,\dots,m_d)$ such that $0< (s_j-m_j)-(s_i-m_i)<e$ for $i<j$ and
$\bm'=(m'_1,\dots,m'_d)$ such that $0< (s'_j-m'_j)-(s'_i-m'_i)<e$ for $i<j$.
By Theorem~\ref{thmcbs}, we know that~$\cU_{\bs,e}(n)$ 
(respectively~$\cK_{\bs,e}(n)$) is the canonical basic set for~$\cH_n$ with 
respect to~$<_\bm$ (respectively~$<_{\bm'}$).

Denote $\left\{ G_v(\bmu,\bs) \mid \bmu\in\cU_{\bs,e} \right\}$ the 
canonical basis of~$V_{\bs,e}$. Similarly, denote $\left\{ G'_v(\bla,\bs') \mid
\bla\in\cK_{\bs,e} \right\}$ the canonical basis of~$V_{\bs',e}$.
Decompose the elements 
$$G_v(\bmu,\bs)=\sum_{\bnu\vdash_d |\bmu|} g_{\bnu,\bmu}(v) \bnu \mand 
G'_v(\bla,\bs')=\sum_{\bnu\vdash_d |\bla|} g'_{\bnu,\bla}(v) \bnu$$
on the basis of $d$-partitions. Ariki's theorem ensures that the decomposition 
numbers of~$\cH_n$ are given by the evaluations~$g_{\bnu,\bmu}(1)$ (or 
equivalently~$g'_{\bnu,\bla}(1)$).

Let us focus on~$\cK_{\bs,e}(n)$. The fact that it is the canonical basic set 
with respect to~$<_{\bm'}$ means that for all $\bla\in\cK_{\bs,e}(n)$, $\bla$ 
is the unique element which is smaller (with respect to~$<_{\bm'}$) than all 
$d$-partitions $\bnu$ with $g_{\bnu,\bla}(v)\neq0$. Moreover, 
$g_{\bla,\bla}(1)=1$. Now, because of the particular value of~$\bm'$ we have 
chosen, the order~$<_{\bm'}$ coincides with the classic dominance order, see e.g.
\cite[Proof of Proposition 1.2.11]{Gerber2014b}.
Besides, the same properties holds in the cellular theory for this dominance 
ordering, namely~$\bla$ is the unique element which is smaller (with respect 
to the dominance order) than all $d$-partitions~$\bnu$ such that the 
decomposition number $d_{\bnu,\bla}$ is non-zero; and moreover 
$d_{\bla,\bla}=1$. For this, see for instance to \cite[Theorem 2.2]{Ariki2001}.
This means that the labelling of~$\Irr(\cH_n)$ by~$\cK_{\bs,e}(n)$ as in
Theorem \ref{thmbranchingAKkl} on the one hand and by the theory of canonical 
basic sets on the other hand is exactly the same. Precisely, if 
$\bla\in\cK_{\bs,e}(n)$ labels the simple module~$M$ by the theory of canonical 
basic sets, then $M=D^{\bla}$. In particular, the theory of canonical basic sets 
provides the same branching information.

Finally, let us go back to the labelling of~$\Irr(\cH_n)$ by~$\cU_{\bs,e}(n)$ 
via the theory of canonical basic sets. We already know the existence of the 
crystal isomorphism $\cK_{\bs,e} \overset{\varphi}{\lra} \cU_{\bs,e}$. The 
characteristic property of the canonical basis elements~$G_v(\bmu,\bs)$ 
and~$G'_v(\bla,\bs')$ \cite[Section 4]{Uglov1999} ensures that this crystal 
isomorphism maps the Kleshchev $d$-partition~$\bla$ labelling an irreducible 
module~$M$ to the Uglov $d$-partition~$\varphi(\bla)$ labelling the same 
module~$M$ (which we had denoted~$M^{\varphi(\bla)}$). Since $M=D^{\bla}$, we 
deduce $M^{\bmu} = M =  D^{\bla} = C^{\bmu}$ where $\bmu = \varphi(\bla)$.
\end{proof}

We end this section by mentioning that it is also relevant to work with Uglov's 
realization of the crystal when we want to link the representation theory of 
Ariki-Koike algebras with that of rational Cherednik algebras. 
In~\cite{Shan2008}, Shan has defined $i$-induction and $i$-restriction functors 
on the category~$\cO_c$ for the family of rational Cherednik algebras of 
type~$G(d,1,n)$ with parameter~$c$, for $n\geq0$. Similarly to Ariki's $i$-induction and 
$i$-restriction (for the Ariki-Koike algebra), she has proved that these 
operators induce the structure of an abstract crystal, which is isomorphic to 
that of a Fock space~$\cF_{\bs,e}$, where the parameters~$\bs$ and~$e$ are 
determined by~$c$. Moreover, Losev~\cite{Losev2013} has used Uglov's realization 
of~$\cF_{\bs,e}$ to give an explicit combinatorial rule for the computation of 
this crystal.

Now, the representation theory of Ariki-Koike algebras on the one hand, and 
rational Cherednik algebras on the other hand, are known to be related by an 
exact functor $KZ : \cO_c \lra \bigoplus_{n \geq 0} \cH_n$-mod. This functor 
has the nice property of 
commuting with Ariki's (respectively Shan's) $i$-induction and $i$-restriction 
functors. Therefore, it maps the Uglov crystal of~$\cF_{\bs,e}$ encoding Shan's 
branching rule to the Uglov crystal~$\cB_{\bs,e}$ encoding Ariki's branching 
rule. Note that relying on previous knowledge 
(\cite[Corollary 5.8]{ChlouverakiGordonGriffeth2011}) about canonical basic 
sets, this was already mentioned in \cite[Paragraph 4.15]{GordonMartino2013}.

\section{Branching in endomorphism algebras}
\label{BranchingEndomorphism}

Let~$G$ be a finite group with a split $BN$-pair of characteristic~$p$
satisfying the commutator relations. The set of $N$-conjugates of the 
standard Levi subgroups is denoted by~$\mathcal{L}$. We let $\mathcal{L'} :=
\mathcal{L'}_G$ denote an $N$-stable subset of~$\mathcal{L}$ containing~$G$
and $B \cap N$ and satisfying 
$L \cap {^x\!M} \in \mathcal{L}'$ for all $L, M \in \mathcal{L'}$ and all 
$x \in N$. For $M \in \mathcal{L'}$ we put $\mathcal{L'}_M := \{ L \in 
\mathcal{L'} \mid L \leq M \}$.

Next, let~$k$ be a field of characteristic different from~$p$, such that~$k$
is a splitting field for all subgroups of~$G$. If~$A$ is a $k$-algebra, we write 
$A$-mod and mod-$A$ for the category of finite dimensional left, respectively 
right, $A$-modules. 

For $L \in \mathcal{L}$ we denote $R_L^G$ and ${^*\!R}_L^G$ the Harish-Chandra
induction and restriction functors, respectively. These are defined using a
choice of a parabolic subgroup of~$G$ with Levi complement~$L$, but are
known to be independent of this choice up to a natural isomorphism of functors
(see e.g.\ \cite[Theorem~$3.10$]{CabanesEnguehard2004}).

Following \cite[Section 2.3]{GerberHissJacon2014}, we call a simple 
$kG$-module~$X$ weakly cuspidal (with 
respect to~$\mathcal{L}'$), if ${^*\!R}_L^G(X) = 0$ for all $L \in \mathcal{L}'$ 
with $L \neq G$. A weakly cuspidal pair $(L,X)$ consists of an $L \in 
\mathcal{L'}$ and a simple $kL$-module~$X$ which is weakly cuspidal with respect 
to $\mathcal{L'}_L$. 

Now fix a weakly cuspidal pair $(L,X)$ and put 
$$Y := R_L^G(X).$$
Write $H_G := \End_{kG}( Y )$ and $kG\mbox{\rm -mod}_Y$
for the full subcategory of $kG$-mod consisting of those modules which are
quotients and submodules of a finite direct sum of copies of~$Y$. The 
covariant Hom-functor with respect to~$Y$ is denoted by~$F_Y$:
$$F_Y: kG\mbox{\rm -mod} \rightarrow \mbox{\rm mod-}H_G,\quad V \mapsto 
\Hom_{kG}(Y,V).$$
Here, $H_G$ acts on the right of $\Hom_{kG}(Y,V)$ by composition of maps.
A result of Cabanes states that $F_Y$ induces an equivalence 
between $kG\mbox{\rm -mod}_Y$ and $\mbox{\rm mod-}H_G$, provided 
that~$H_G$ is self-injective (see \cite[Theorem 2]{Cabanes1990}).

Suppose now that $M \in \mathcal{L}'$ with $L \leq M$ (then $L \in 
\mathcal{L}'_M$ by definition of $\mathcal{L}'_M$) and put
$$Z := R_L^M(X).$$
Write $H_M := \End_{kM}(Z)$ and~$F_Z$ for the covariant
Hom-functor between $kM\mbox{\rm -mod}$ and $\mbox{\rm mod-}H_M$.
There is a natural embedding 
$$i : H_M \rightarrow \End_{kG}( R_M^G( Z ) ),$$
and as $R_M^G(Z) \cong Y$, the map~$i$
induces a restriction functor
$$\Res^{H_G}_{H_M}: \mbox{\rm mod-}H_G \rightarrow \mbox{\rm mod-}H_M.$$

The following result is essentially due to Howlett and Lehrer (see
\cite[Theorem 1.13]{HowlettLehrer1983}) although they formulate it for the
contravariant Hom-functor and in the semisimple case. 
The analogous statement in the case of defining characteristic is
contained in \cite[2.3]{Cabanes1990}.

\begin{prp}
\label{HCRestricitonAndRestriction}
The following diagram of functors is commutative up to a natural
isomorphism of functors.

$$
\begin{xy}
\xymatrix{
kG\mbox{\rm -mod} \ar[r]^{F_Y} \ar[d]_{{^*R}_M^G} & \mbox{\rm mod-}H_G \ar[d]^{\Res^{H_G}_{H_M}} \\
kM\mbox{\rm -mod} \ar[r]_{F_Z} & \mbox{\rm mod-}H_M
}
\end{xy}
$$
\end{prp}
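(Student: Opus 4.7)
The plan is to combine transitivity of Harish-Chandra induction with Frobenius reciprocity for the adjoint pair $(R_M^G, {^*\!R}_M^G)$. Since $L \leq M \leq G$, transitivity yields a natural isomorphism $R_M^G(Z) = R_M^G(R_L^M(X)) \cong R_L^G(X) = Y$; this is precisely the isomorphism implicit in the definition of the embedding $i : H_M \rightarrow H_G$, and I fix it once and for all. Frobenius reciprocity then provides, for each $V \in kG\mbox{\rm -mod}$, a natural isomorphism
$$\alpha_V : F_Y(V) = \Hom_{kG}(Y, V) \stackrel{\sim}{\lra} \Hom_{kG}(R_M^G(Z), V) \stackrel{\sim}{\lra} \Hom_{kM}(Z, {^*\!R}_M^G(V)) = F_Z({^*\!R}_M^G(V)),$$
whose naturality in~$V$ is automatic. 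This already proves commutativity at the level of $k$-linear categories, forgetting the right-module structures over the endomorphism algebras.

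The essential content of the proposition is that~$\alpha_V$ is moreover a morphism of right $H_M$-modules, where the left hand side carries the action inherited from $H_G$ via $\Res^{H_G}_{H_M}$ and the right hand side carries the tautological action defined by $F_Z$. I would describe~$\alpha_V$ explicitly via the unit $\eta : \mbox{\rm id} \ra {^*\!R}_M^G \circ R_M^G$ of the adjunction as $\alpha_V(\phi) = {^*\!R}_M^G(\phi) \circ \eta_Z$. By construction of~$i$, for $h \in H_M = \End_{kM}(Z)$ the Hecke element $i(h) \in H_G$ acts on $\phi \in F_Y(V)$ by precomposition with $R_M^G(h)$ (after the transitivity identification). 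The required equality $\alpha_V(\phi \circ R_M^G(h)) = \alpha_V(\phi) \circ h$ then amounts to
$${^*\!R}_M^G(\phi) \circ {^*\!R}_M^G(R_M^G(h)) \circ \eta_Z = {^*\!R}_M^G(\phi) \circ \eta_Z \circ h,$$
which follows from the naturality of~$\eta$ applied to the morphism $h : Z \ra Z$.

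The main obstacle is bookkeeping: distinguishing the transitivity isomorphism $R_M^G R_L^M \cong R_L^G$ that enters in the definition of~$i$ from the unit of the Frobenius adjunction underlying Frobenius reciprocity, and verifying that these data interact coherently so that the Hecke action really transfers across the isomorphism. Once the conventions are fixed, everything is formal. This mirrors the defining-characteristic version in \cite[2.3]{Cabanes1990} and the semisimple, contravariant Howlett-Lehrer version \cite[Theorem 1.13]{HowlettLehrer1983}; the only genuine input from outside is that Harish-Chandra induction admits Frobenius reciprocity in non-defining characteristic, which is classical and can be found for instance in \cite{CabanesEnguehard2004}.
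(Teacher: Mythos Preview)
Your proposal is correct and follows essentially the same approach as the paper: both arguments reduce to Frobenius reciprocity for the adjunction $(R_M^G,{^*\!R}_M^G)$ and then check that the resulting isomorphism respects the right $H_M$-action by appealing to the naturality of the unit map $\eta$ (the paper phrases this as ``the isomorphism equals $\eta^*$'', you phrase it as the unit-naturality square for $h\colon Z\to Z$). The only cosmetic difference is that the paper unpacks $R_M^G$ and ${^*\!R}_M^G$ concretely via a chosen parabolic $Q$ (tensor induction followed by inflation, and $O_p(Q)$-fixed points respectively), whereas you work directly with the abstract adjunction; this lets the paper avoid your ``bookkeeping'' paragraph by simply declaring $Y=R_M^G(Z)$ from the outset.
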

\begin{prf}
We may assume~$Y = R_M^G(Z)$. To proceed, we choose a parabolic subgroup~$Q$
of~$G$ with Levi complement~$M$ to define the functors $R_M^G$ and ${^*R}_M^G$; 
in particular,
$$Y = R_M^G(Z) = kG \otimes_{kQ} \Infl_M^Q( Z ).$$
Then for every $V \in kG\mbox{\rm -mod}$ we have
$$F_Y(V) = \Hom_{kG}( Y, V ) = \Hom_{kG}( kG \otimes_{kQ} \Infl_M^Q( Z ), V ).$$
Applying Frobenius reciprocity, we find
$$\Hom_{kG}( kG \otimes_{kQ} \Infl_M^Q( Z ), V ) \cong
\Hom_{kQ}( \Infl_M^Q( Z ), \Res^G_Q( V ) )$$
as right $H_M = \End_{kQ}( \Infl_M^Q( Z ) )$-modules. (Indeed, the above
isomorphism equals $\eta^*$, with $\eta : \Infl_M^Q( Z ) 
\rightarrow kG \otimes_{kQ} \Infl_M^Q( Z )$, $z \mapsto 1 \otimes z$.)
Clearly, 
\begin{eqnarray*} \Hom_{kQ}( \Infl_M^Q( Z ), \Res^G_Q( V ) ) & \cong & 
\Hom_{kM}( Z , \Fix_{O_p(Q)}( \Res^G_Q( V ) ) ) \\
& = & F_Z( {^*R}_M^G( V ) )
\end{eqnarray*}
as right $H_M$-modules. As all of the above isomorphisms are natural,
the result follows.
\end{prf}

\begin{lem}
\label{modYstability}
Let $S \in kM\mbox{\rm -mod}_Z$ and $T \in kG\mbox{\rm -mod}_Y$.
Then $R_M^G(S) \in kG\mbox{\rm -mod}_Y$ and ${^*\!R}_M^G(T) \in 
kM\mbox{\rm -mod}_Z$.
\end{lem}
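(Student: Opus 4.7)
The proof naturally splits along the two assertions. For the first, the plan is to combine the exactness of Harish-Chandra induction with the transitivity $R_M^G(Z) = R_M^G R_L^M(X) = R_L^G(X) = Y$. Given $S \in kM\mbox{\rm -mod}_Z$, fixing an epimorphism $Z^a \twoheadrightarrow S$ and a monomorphism $S \hookrightarrow Z^b$, I would apply the exact functor $R_M^G$ to obtain $Y^a \twoheadrightarrow R_M^G(S)$ and $R_M^G(S) \hookrightarrow Y^b$, which are the presentations witnessing $R_M^G(S) \in kG\mbox{\rm -mod}_Y$.

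For the second assertion the analogous recipe with the exact functor ${^*R}_M^G$, applied to $Y^a \twoheadrightarrow T$ and $T \hookrightarrow Y^b$, yields $({^*R}_M^G Y)^a \twoheadrightarrow {^*R}_M^G(T)$ and ${^*R}_M^G(T) \hookrightarrow ({^*R}_M^G Y)^b$. The entire content of the second assertion therefore reduces to the single claim ${^*R}_M^G(Y) \in kM\mbox{\rm -mod}_Z$: once this is established, composing the displayed presentations with presentations of ${^*R}_M^G(Y)$ as a submodule and as a quotient of powers of $Z$ furnishes the presentations of ${^*R}_M^G(T)$ required for membership in $kM\mbox{\rm -mod}_Z$.

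To establish the reduction step, I would evaluate ${^*R}_M^G(Y) = {^*R}_M^G R_L^G(X)$ by means of the Mackey formula for Harish-Chandra functors, which decomposes it as a direct sum over $(W_M, W_L)$-double coset representatives~$w$ with summands $R_{M \cap {^w L}}^M({^w({^*R}_{L \cap {^{w^{-1}} M}}^L(X))})$. The closure of $\mathcal{L}'$ under conjugate-intersection guarantees $L \cap {^{w^{-1}} M} \in \mathcal{L}'_L$, so the weak cuspidality of $X$ with respect to $\mathcal{L}'_L$ forces every summand with $L \cap {^{w^{-1}} M} \neq L$ to vanish. The surviving summands are the $R_{{^w L}}^M({^w X})$ with ${^w L} \subseteq M$, and the $w = 1$ summand is precisely $R_L^M(X) = Z$. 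The delicate step is controlling the remaining $w \neq 1$ summands: the plan is to exploit the $N$-stability of $\mathcal{L}'$ together with the $N$-equivariance of weak cuspidality to identify each such $R_{{^w L}}^M({^w X})$ as a module admitting presentations inside $kM\mbox{\rm -mod}_Z$, invoking when needed the structural apparatus of~\cite{GerberHissJacon2014}. Once each surviving summand is placed in $kM\mbox{\rm -mod}_Z$, so is their direct sum ${^*R}_M^G(Y)$, completing the reduction and hence the proof of the lemma.
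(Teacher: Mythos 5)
Your treatment of the first assertion (exactness of $R_M^G$ together with the transitivity isomorphism $R_M^G(Z) \cong Y$), your reduction of the second assertion to the single module ${^*\!R}_M^G(Y) = {^*\!R}_M^G R_L^G(X)$ by composing presentations, and your Mackey computation all match the paper's own argument: the closure of $\mathcal{L}'$ under conjugate-intersection and the weak cuspidality of $(L,X)$, hence of every $N$-conjugate pair, do indeed annihilate exactly those summands with ${^w\!L} \not\leq M$. Up to that point the proposal is correct and follows the same route as the paper.

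The gap is the step you yourself call ``delicate'' and then leave as a plan: you never prove that the surviving summands $R_{{^w\!L}}^M({^w\!X})$ with ${^w\!L} \leq M$ (and $w \neq 1$, $w$ a representative in $N$) lie in $kM\mbox{\rm -mod}_Z$, and this is the only part of the second assertion with genuine content. Appealing to the $N$-stability of $\mathcal{L}'$ and the $N$-equivariance of weak cuspidality does not settle it: those facts only show that $({^w\!L},{^w\!X})$ is again a weakly cuspidal pair inside $M$, and an arbitrary weakly cuspidal pair in $M$ would induce a module having nothing to do with $Z$. What is needed, and what the paper uses to conclude, is the identification of each surviving summand with a direct sum of copies of $Z = R_L^M(X)$ itself, coming from the fact that the inducing datum is an $N$-conjugate of the original pair $(L,X)$ which happens to be contained in $M$; membership in $kM\mbox{\rm -mod}_Z$ is then immediate. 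Without establishing this isomorphism (or at the very least membership of these summands in $kM\mbox{\rm -mod}_Z$), your reduction does not close, so as written the proposal reproduces the paper's proof minus its final, decisive claim.
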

\begin{prf}
As $R_M^G$ is exact, and as $R_M^G(Z)$ is isomorphic to~$Y$, the first assertion 
is clear. To prove the second assertion, assume that~$T$ is a submodule and a 
quotient of $Y' = R_L^G(X')$, where~$Y'$ and~$X'$ are direct sums of the same 
number of copies of~$Y$, respectively~$X$. Hence ${^*\!R}_M^G(T)$ is a submodule 
and a quotient of ${^*\!R}_M^G( R_L^G(X') )$. By Mackey's theorem, the latter is 
isomorphic to a direct sum of modules of the form
\begin{equation}
\label{Summand}
R_{M \cap {^x\!L}}^M ( {^*\!R}_{M \cap {^x\!L}}^{^x\!L}( {^x\!X'} ) )
\end{equation}
for suitable $x\in N$. As $(L,X)$ is weakly cuspidal, so is $({^x\!L},{^x\!X})$
for all such~$x$. It follows that a summand~(\ref{Summand}) is zero unless 
${^x\!L} \leq M$, in which case~(\ref{Summand}) is isomorphic to a direct sum 
of copies of~$Z$. The claim follows.
\end{prf}

\begin{prp}
\label{SameBranching}
Let $S \in kM\mbox{\rm -mod}_Z$ and $T \in kG\mbox{\rm -mod}_Y$. Suppose
that $H_M$ is self-injective. Then
$$\Hom_{H_M}( F_Z(S), \Res^{H_G}_{H_M}( F_Y(T) ) ) \cong \Hom_{kM}( S, {^*R}_M^G(T) ).$$
\end{prp}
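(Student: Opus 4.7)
The plan is to combine the three preceding results in sequence. First I would note that by Lemma~\ref{modYstability}, ${^*R}_M^G(T)$ lies in $kM\text{-mod}_Z$. This is the key input that puts us in a situation where Cabanes' equivalence can be applied on the right-hand side of the putative isomorphism.

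Second, I would invoke Proposition~\ref{HCRestricitonAndRestriction} to rewrite
\[
\Res^{H_G}_{H_M}( F_Y(T) ) \;\cong\; F_Z\bigl( {^*R}_M^G(T) \bigr)
\]
as right $H_M$-modules, naturally in~$T$. Substituting this in, the left-hand side of the desired isomorphism becomes
\[
\Hom_{H_M}\bigl( F_Z(S),\, F_Z({^*R}_M^G(T)) \bigr).
\]

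Third, since $H_M$ is assumed to be self-injective, Cabanes' theorem (\cite[Theorem~2]{Cabanes1990}, as recalled in the excerpt) gives that $F_Z$ restricts to an equivalence of categories between $kM\text{-mod}_Z$ and $\mbox{\rm mod-}H_M$. Both~$S$ and ${^*R}_M^G(T)$ belong to $kM\text{-mod}_Z$, the former by hypothesis and the latter by step one, so $F_Z$ is fully faithful on the pair $(S, {^*R}_M^G(T))$. Consequently
\[
\Hom_{H_M}\bigl( F_Z(S),\, F_Z({^*R}_M^G(T)) \bigr) \;\cong\; \Hom_{kM}\bigl( S,\, {^*R}_M^G(T) \bigr),
\]
which is precisely the asserted isomorphism.

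No step looks genuinely delicate; the only thing that needs a little care is the right $H_M$-module naturality of the isomorphism from Proposition~\ref{HCRestricitonAndRestriction}, which was already built into its statement, and the verification that the Cabanes equivalence identifies $H_M$-module homomorphisms with $kM$-module homomorphisms at the level of Hom-spaces (this is automatic from the equivalence property, so no additional argument is required). The proof should therefore be a short three-line chain of isomorphisms citing Lemma~\ref{modYstability}, Proposition~\ref{HCRestricitonAndRestriction}, and Cabanes' theorem in turn.
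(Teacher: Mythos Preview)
Your proposal is correct and matches the paper's own proof essentially line for line: the paper writes the same two-step chain of isomorphisms, citing Proposition~\ref{HCRestricitonAndRestriction} for the first step and \cite[Theorem~2]{Cabanes1990} together with Lemma~\ref{modYstability} for the second.
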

\begin{prf}
We have
\begin{eqnarray*}
\Hom_{H_M}( F_Z(S), \Res^{H_G}_{H_M}( F_Y(T) ) ) & \cong &
\Hom_{H_M}( F_Z(S), F_Z( {^*R}_M^G(T) ) ) \\
 & \cong & \Hom_{kM}( S, {^*R}_M^G(T) ),
\end{eqnarray*}
where the first isomorphism follows from 
Proposition~\ref{HCRestricitonAndRestriction}, and the second from 
\cite[Theorem 2]{Cabanes1990} together with Lemma~\ref{modYstability}.
\end{prf}

\begin{rem}
\label{SymmetricRemark}
{\rm
By \cite[Proposition~$2.3$]{GerberHissJacon2014}, the simple submodules of~$Z$
and of~$Y$ belong to $kM\mbox{\rm -mod}_Z$ and $kG\mbox{\rm -mod}_Y$, 
respectively. Thus if~$H_M$ is self-injective, Proposition~\ref{SameBranching} 
applies to these simple modules.
}
\end{rem}

\section{The Harish-Chandra branching graph}
\label{HCBranchingGraph}

In \cite[Section~4]{GerberHissJacon2014}, we introduced the Harish-Chandra 
branching graph for unipotent modules of certain classical groups. Let us 
recall this definition. Fix primes $p \neq \ell$ and a power~$q$ of~$p$. 
Let~$k$ denote an algebraically closed field of characteristic~$\ell$. For 
every $n \in \mathbb{Z}_{\geq 0}$ we consider the groups $\GU_{2n}(q)$, 
$\GU_{2n+1}(q)$, $\SO_{2n+1}(q)$ and~$\Sp_{2n}(q)$ (with $\GU_0(q)$ and 
$\Sp_0(q)$ the trivial group), and call~$n$ the \textit{rank} of such a 
group. The \textit{Dynkin type} of these groups
is ${^2\!}A_0$, ${^2\!}A_1$,~$B$ and~$C$, respectively. 

We now fix one of these Dynkin types~$\mathcal{D}$, say, and write $G_n$
for the group of Dynkin type~$\mathcal{D}$ and rank~$n$. Then~$G_n$ is
a group with a split $BN$-pair of characteristic~$p$ satisfying the 
commutator relations. If~$r$,~$m$ are non-negative integers with $r + m = n$, 
there is a standard Levi subgroup~$L_{r,m}$ of~$G_n$
isomorphic to $G_r \times \GL_1(q^\delta) \times \cdots \times 
\GL_1(q^\delta)$ with~$m$ factors $\GL_1(q^\delta)$ and $\delta = 1$ 
if~$\mathcal{D}$ is~$B$ or~$C$, and $\delta = 2$, otherwise. These Levi
subgroups and their $N$-conjugates are called \textit{pure Levi subgroups} 
of~$G_n$. The set $\mathcal{L}'_n$ of pure Levi subgroups of~$G_n$ is 
$N$-stable and satisfies $L \cap {^x\!M} \in \mathcal{L}'_n$ for all
$L, M \in \mathcal{L}'_n$.

The Harish-Chandra branching graph $\mathcal{G}_{\mathcal{D},q,\ell}$ for the
Dynkin type~$\mathcal{D}$ (and fixed~$q$ and~$\ell$) is defined as follows. 
Its vertices are the 
isomorphism classes of the unipotent $kG_n$-modules, where~$n$ runs over the 
integers. There is an arrow $[X] \rightarrow [Y]$ between the vertices~$[X]$ 
and~$[Y]$ of $\mathcal{G}_{\mathcal{D},q,\ell}$, if and only if there is 
$n \in \mathbb{Z}_{\geq 0}$ such that~$X$ and~$Y$ are $kG_n$- and $kG_{n+1}$-modules, 
respectively, and the inflation of~$X$ to~$L_{n,1}$ occurs in the socle of 
${^*\!R}_{L_{n,1}}^{G_{n+1}}(Y)$, i.e.\ if and only if
$\Hom_{kL_{n,1}}( \Infl_{G_{n}}^{L_{n,1}}( X ), {^*\!R}_{L_{n,1}}^{G_{n+1}}(Y) )
\neq 0$. (Recall that $L_{n,1} \cong G_n \times \GL_1(q^\delta)$.)

Let $(L,X)$ be a weakly cuspidal pair, where $L = G_m$ for some $m \in 
\mathbb{Z}_{\geq 0}$, and~$X$ is unipotent. For a non-negative integer~$n$ put
$$Y_n := R_{L_{m,n}}^{G_{m+n}}( X ),$$
where~$X$ is viewed as a $k{L_{m,n}}$-module via inflation. Moreover, we
put 
$$\mathscr{H}_n := \mathscr{H}_n(X) := \End_{kG_{m+n}}( Y_n ).$$
Then there are natural inclusions
$$\nu_n: \mathscr{H}_n \rightarrow \mathscr{H}_{n+1}.$$
We also write $F_n$ for the $\Hom$-functor 
$$F_n : kG_n\mbox{\rm -mod}_{Y_n} \rightarrow \mbox{\rm mod-}\mathscr{H}_n.$$
The branching graph for $\mathscr{H}_n$, $n \geq 0$, is defined as follows. 
Its vertices are
the isomorphism classes of the simple $\mathscr{H}_n$-modules, where~$n$
runs through the non-negative integers. Two such vertices~$[S]$ and~$[T]$ are
connected by an arrow $[S] \rightarrow [T]$, if and only if there is $n \in
\mathbb{Z}_{\geq 0}$, such that~$S$ is an $\mathscr{H}_n$-module and~$T$ is an
$\mathscr{H}_{n+1}$-module such that~$S$ occurs in the socle of 
$\Res^{\mathscr{H}_{n+1}}_{\mathscr{H}_n}( T )$, i.e.\ if and only if
$\Hom_{\mathscr{H}_n}( S, \Res^{\mathscr{H}_{n+1}}_{\mathscr{H}_n}( T ) ) 
\neq 0$.

\begin{define}
{\rm 
Define $\mathcal{G}_{\mathcal{D},q,\ell}( X )$ to be the induced subgraph of 
$\mathcal{G}_{\mathcal{D},q,\ell}$, consisting of the vertices~$[Y]$ such that 
there is a directed path from~$[X]$ to~$[Y]$. 
}
\end{define}

\begin{prp}
\label{ConnectedComponents}
The subgraph $\mathcal{G}_{\mathcal{D},q,\ell}( X )$ is a connected component
of $\mathcal{G}_{\mathcal{D},q,\ell}$ (with respect to the underlying undirected
graph), and every connected component of $\mathcal{G}_{\mathcal{D},q,\ell}$ is
of this form. 
\end{prp}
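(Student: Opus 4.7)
The plan is to prove two complementary facts: (A) every vertex $[Y]$ of $\mathcal{G}_{\mathcal{D},q,\ell}$ is forward-reachable by a directed path from some weakly cuspidal unipotent vertex $[X']$; and (B) each subgraph $\mathcal{G}_{\mathcal{D},q,\ell}(X)$ is closed under \emph{reversing} an arrow (closure under forward arrows being automatic from the definition). Together, (A) and (B) force $\mathcal{G}_{\mathcal{D},q,\ell}(X)$ to coincide with the undirected connected component of $[X]$, and to exhaust every such component.

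For (A), I induct on the rank $n$ of a simple unipotent $kG_n$-module $Y$. If $Y$ is weakly cuspidal, take $X' = Y$; otherwise some ${}^*\!R_L^{G_n}(Y) \neq 0$ for a proper pure Levi $L$. Since every proper pure Levi of $G_n$ is contained in an $N$-conjugate of $L_{n-1,1}$, transitivity of Harish-Chandra restriction yields ${}^*\!R_{L_{n-1,1}}^{G_n}(Y) \neq 0$. A simple submodule of this restriction is, by the behaviour of Harish-Chandra restriction on unipotent modules (constituents land in the unipotent Lusztig series of $L_{n-1,1} \cong G_{n-1} \times \GL_1(q^\delta)$, and the $\GL_1(q^\delta)$-factor admits only the trivial unipotent), the inflation of a simple unipotent $kG_{n-1}$-module $Z$, so there is an arrow $[Z] \to [Y]$ in the graph, and the induction hypothesis applied to $Z$ concludes (A).

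For (B), set $Y_n := R_{L_{m,n}}^{G_{m+n}}(X)$. I first establish, by induction along a directed path from $[X]$ to $[Y]$ using Lemma~\ref{modYstability} and Remark~\ref{SymmetricRemark}, that every simple $Y$ in $\mathcal{G}_{\mathcal{D},q,\ell}(X) \cap kG_{m+r+1}\mbox{\rm -mod}$ lies in $kG_{m+r+1}\mbox{\rm -mod}_{Y_{r+1}}$. Given now an arrow $[Z] \to [Y]$, applying the second half of Lemma~\ref{modYstability} with $L = L_{m,r+1}$ inside $M = L_{m+r,1}$ places ${}^*\!R_{L_{m+r,1}}^{G_{m+r+1}}(Y)$ in $kL_{m+r,1}\mbox{\rm -mod}_{R_{L_{m,r+1}}^{L_{m+r,1}}(X)}$. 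Under the natural identification $L_{m+r,1} \cong G_{m+r} \times \GL_1(q^\delta)$ with $R_{L_{m,r+1}}^{L_{m+r,1}}(X) \cong Y_r \boxtimes \mathbf{1}$, the simple submodule $Z$ then falls into $kG_{m+r}\mbox{\rm -mod}_{Y_r}$.

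The principal obstacle — and the step on which I expect to spend most effort — is to promote the conclusion ``$Z \in kG_{m+r}\mbox{\rm -mod}_{Y_r}$'' into actual forward-reachability of $[Z]$ from $[X]$. I plan to induct on $r$: using the transitivity $Y_r \cong R_{L_{m+r-1,1}}^{G_{m+r}}(Y_{r-1})$ and the biadjunction of $R$ and ${}^*\!R$, a simple submodule $Z$ of $Y_r$ produces a non-zero morphism ${}^*\!R_{L_{m+r-1,1}}^{G_{m+r}}(Z) \to Y_{r-1}$; from its image I extract a simple module $Z'$ that is both a submodule of $Y_{r-1}$ and (via the arrow definition) a predecessor of $[Z]$ in the graph. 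The induction hypothesis then supplies a directed path from $[X]$ to $[Z']$, which is prolonged by the arrow $[Z'] \to [Z]$. The delicate point is reconciling the adjunction direction with the arrow definition: $Z'$ is naturally produced as a head constituent of ${}^*\!R_{L_{m+r-1,1}}^{G_{m+r}}(Z)$, whereas an arrow $[Z'] \to [Z]$ requires $Z'$ to appear in the socle. I expect to bridge the gap via the self-duality of the unipotent modules involved, which equates the composition factors in head and socle of ${}^*\!R_{L_{m+r-1,1}}^{G_{m+r}}(Z)$.
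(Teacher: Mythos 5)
Your overall architecture --- (A) every vertex is forward-reachable from a weakly cuspidal source, (B) $\mathcal{G}_{\mathcal{D},q,\ell}(X)$ is closed under reversing arrows --- is sound, and part (A) and the reduction of (B) to membership of $Z$ in $kG_{m+r}\mbox{\rm -mod}_{Y_r}$ are fine. But the step you yourself flag as the principal obstacle is a genuine gap as proposed. First, self-duality of the simple unipotent $kG_n$-modules in non-defining characteristic is not a fact you can simply invoke; it would itself need proof. Second, and more seriously, even granting that ${^*\!R}_{L_{m+r-1,1}}^{G_{m+r}}(Z)$ has the same simple constituents in its head and its socle, this does not produce a single $Z'$ meeting both of your requirements: a simple submodule of the image of your adjoint map ${^*\!R}_{L_{m+r-1,1}}^{G_{m+r}}(Z)\to \Infl_{G_{m+r-1}}^{L_{m+r-1,1}}(Y_{r-1})$ does lie in the socle of $Y_{r-1}$, but is a priori only a composition factor of ${^*\!R}(Z)$, so it need not give an arrow $[Z']\to[Z]$; while a simple quotient of that image lies in the head (hence, under your hypothesis, the socle) of ${^*\!R}(Z)$, but is a priori only a subquotient of $Y_{r-1}$, so your induction hypothesis does not apply to it. The induction therefore does not close as written. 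The gap is repairable with the tools you already cite: since $Z$ embeds in $Y_r$, Remark~\ref{SymmetricRemark} puts $Z$ in $kG_{m+r}\mbox{\rm -mod}_{Y_r}$, and Lemma~\ref{modYstability} with $M=L_{m+r-1,1}$ places ${^*\!R}_M^{G_{m+r}}(Z)$ in $kM\mbox{\rm -mod}_{\Infl(Y_{r-1})}$; hence \emph{every} simple submodule of ${^*\!R}_M^{G_{m+r}}(Z)$ is of the form $\Infl(Z')$ with $Z'$ in the socle of $Y_{r-1}$, which yields the arrow $[Z']\to[Z]$ and the input for the induction simultaneously (non-vanishing of ${^*\!R}_M^{G_{m+r}}(Z)$ follows by adjunction from $Z\hookrightarrow Y_r\cong R_M^{G_{m+r}}(\Infl(Y_{r-1}))$), with no appeal to self-duality.

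You should also note that the paper argues quite differently and much more briefly: for an arrow $[Y]\to[Z]$ with $[Z]\in\mathcal{G}_{\mathcal{D},q,\ell}(X)$ it picks a source vertex $[X']$ with a directed path to $[Y]$, observes that $Z$ then lies in the weak Harish-Chandra series of both $(L_{m,n},X)$ and $(L_{m',n'},X')$, and concludes $[X']=[X]$ from the uniqueness (up to conjugacy) of the weakly cuspidal support together with the fact that the relative Weyl group of $L_{m,n}$ fixes $X$. That route avoids the converse direction you are struggling with (series membership implying reachability) at the price of using these two structural inputs; your route, once repaired as above, avoids them and in fact proves the stronger statement that every member of the weak Harish-Chandra series of $(L_{m,n},X)$ is reachable from $[X]$.
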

\begin{prf}
By definition, every vertex of
$\mathcal{G}_{\mathcal{D},q,\ell}( X )$ is connected to~$[X]$, and 
thus $\mathcal{G}_{\mathcal{D},q,\ell}( X )$ is connected. 

Consider a path $[Y] \rightarrow [Z]$ of length~$1$ in 
$\mathcal{G}_{\mathcal{D},q,\ell}$. 
We claim that~$[Y]$ and~$[Z]$ belong to $\mathcal{G}_{\mathcal{D},q,\ell}( X )$ 
if and only if one of~$[Y]$ or~$[Z]$ belongs to this subgraph. To prove this, it 
suffices to show that~$[Y]$ is a vertex of 
$\mathcal{G}_{\mathcal{D},q,\ell}( X )$ if~$[Z]$ is one. Consider a
source vertex~$[X']$ of $\mathcal{G}_{\mathcal{D},q,\ell}$ such that there is
a directed path from~$[X']$ to~$[Y]$. Suppose that~$X'$ and~$Z$ are unipotent 
modules of~$G_{m'}$ and~$G_{m+n}$, respectively. Then~$Z$ lies in the weak
Harish-Chandra series defined by $(L_{m,n},X)$ and $(L_{m',n'}, X')$ with
$m+n = m'+n'$. It follows that $L_{m,n}$ and $L_{m',n'}$ are conjugate in
$G_{m+n}$ and thus are equal. Moreover, $X$ and $X'$ are conjugate by an
element in the relative Weyl group of $L_{m,n}$. As the latter group fixes~$X$,
it follows that $[X] = [X']$, thus proving our claim.

The claim implies that the connected component of
$\mathcal{G}_{\mathcal{D},q,\ell}$ containing 
$\mathcal{G}_{\mathcal{D},q,\ell}( X )$ is equal to 
$\mathcal{G}_{\mathcal{D},q,\ell}( X )$. 

As every vertex of the Harish-Chandra branching graph belongs to some weak
Harish-Chandra series, it is clear that every connected component of 
$\mathcal{G}_{\mathcal{D},q,\ell}$ is of the asserted form.
\end{prf}

\begin{prp}
\label{IsomorphicBranching}
The collection of functors $F_n$, $n \in \mathbb{Z}_{\geq 0}$ yields an 
isomorphism between $\mathcal{G}_{\mathcal{D},q,\ell}(X)$ and the branching 
graph of $\mathscr{H}_n$, $n \geq 0$.
\end{prp}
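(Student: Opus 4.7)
The plan is to show that the family $(F_n)_{n \geq 0}$ yields both a bijection on vertex sets and a bijection on arrows between $\mathcal{G}_{\mathcal{D},q,\ell}(X)$ and the branching graph of the~$\mathscr{H}_n$; throughout I assume each~$\mathscr{H}_n$ is self-injective, which is needed for Cabanes's equivalence and for the application of Proposition~\ref{SameBranching}.

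For the vertex bijection, I would argue that by Proposition~\ref{ConnectedComponents} (and the discussion in its proof), the vertices of $\mathcal{G}_{\mathcal{D},q,\ell}(X)$ at level~$n$ are exactly the simple unipotent $kG_{m+n}$-modules in the weak Harish-Chandra series attached to $(G_m,X)$, i.e.\ those appearing as simple submodules of~$Y_n$. By Remark~\ref{SymmetricRemark}, these simple modules lie in the subcategory $kG_{m+n}\mbox{-mod}_{Y_n}$, on which~$F_n$ is Cabanes's equivalence with $\mbox{mod-}\mathscr{H}_n$; such an equivalence maps simples bijectively onto simples, giving the vertex bijection at every level.

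For the arrow bijection, I would fix simple vertices $Y$ at level~$n$ and $Z$ at level~$n+1$ and apply Proposition~\ref{SameBranching} to the triple $L := L_{m,n+1}$, $M := L_{m+n,1}$, $G := G_{m+n+1}$, observing that $(L,X)$ is weakly cuspidal since~$X$ is cuspidal on~$G_m$ and inflated trivially through the extra $\GL_1(q^\delta)$ factors. Transitivity of Harish-Chandra induction together with its compatibility with the direct product decompositions $L = L_{m,n}\times\GL_1(q^\delta)$ and $M = G_{m+n}\times\GL_1(q^\delta)$ then yields
$$R_L^M(X) \cong \Infl_{G_{m+n}}^{M}(Y_n) \qquad\text{and}\qquad R_L^G(X) = Y_{n+1},$$
and identifies $H_M \cong \mathscr{H}_n$, $H_G \cong \mathscr{H}_{n+1}$. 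Setting $S := \Infl_{G_{m+n}}^{M}(Y)$ and $T := Z$, one checks $F_{R_L^M(X)}(S) = F_n(Y)$ and $F_{R_L^G(X)}(T) = F_{n+1}(Z)$, so Proposition~\ref{SameBranching} produces the natural isomorphism
$$\Hom_{\mathscr{H}_n}\bigl(F_n(Y),\Res^{\mathscr{H}_{n+1}}_{\mathscr{H}_n}(F_{n+1}(Z))\bigr) \cong \Hom_{kM}\bigl(\Infl_{G_{m+n}}^{M}(Y),{^*\!R}_M^G(Z)\bigr).$$
Since~$F_n(Y)$ and $\Infl(Y)$ are both simple, non-vanishing of either side is equivalent to the respective simple module lying in the socle of the right-hand argument, which is precisely the defining arrow condition in each of the two branching graphs. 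The arrow bijection follows.

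The main obstacle is the identification $R_{L_{m,n+1}}^{L_{m+n,1}}(X) \cong \Infl_{G_{m+n}}^{L_{m+n,1}}(Y_n)$ together with the matching $H_M \cong \mathscr{H}_n$: this requires that the additional $\GL_1(q^\delta)$ factor in~$M$ contributes no non-trivial endomorphism to~$H_M$, which in turn relies on the fact that~$X$ is inflated trivially through it, and on the compatibility of Harish-Chandra induction with the direct product decompositions of Levi subgroups. The self-injectivity of each~$\mathscr{H}_n$ is a standing prerequisite, as it underlies both Cabanes's equivalence used in the vertex step and the appeal to Proposition~\ref{SameBranching} in the arrow step.
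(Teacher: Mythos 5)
Your argument follows essentially the same route as the paper's (very condensed) proof: the vertex matching comes from Cabanes's equivalence between $kG_{m+n}\mbox{\rm -mod}_{Y_n}$ and $\mbox{\rm mod-}\mathscr{H}_n$ together with Remark~\ref{SymmetricRemark}, and the arrow matching from Proposition~\ref{SameBranching} applied with $L=L_{m,n+1}$, $M=L_{m+n,1}$, $G=G_{m+n+1}$, using the routine identifications $R_L^M(X)\cong\Infl_{G_{m+n}}^{M}(Y_n)$, $H_M\cong\mathscr{H}_n$ and $H_G\cong\mathscr{H}_{n+1}$ (under which the embedding $i$ of Section~\ref{BranchingEndomorphism} becomes the map $\nu_n$, so the restriction functor $\Res^{H_G}_{H_M}$ of Proposition~\ref{SameBranching} really is the $\Res^{\mathscr{H}_{n+1}}_{\mathscr{H}_n}$ used to define the branching graph of the $\mathscr{H}_n$).

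The one genuine gap is that you carry the self-injectivity of the algebras $\mathscr{H}_n$ as a standing assumption, whereas the proposition is stated unconditionally; this is precisely the point the paper's proof settles first. By \cite[Theorems 1.20(iv), 2.27]{CabanesEnguehard2004}, each $\mathscr{H}_n$ is a symmetric algebra, hence self-injective (alternatively, \cite[Theorem 3.2]{GerberHissJacon2014} identifies $\mathscr{H}_n$ with an Iwahori--Hecke algebra of type $B_n$, which is symmetric); with this citation added, your conditional argument becomes a complete proof of the statement. A minor slip: $X$ is only assumed \emph{weakly} cuspidal on $G_m$, not cuspidal; weak cuspidality of the pair $(L_{m,n+1},X)$ with respect to pure Levi subgroups still holds, as you indicate, because $X$ is inflated trivially through the $\GL_1(q^\delta)$ factors.
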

\begin{prf}
As already noted in the proof of \cite[Proposition~$2.3$]{GerberHissJacon2014}, 
the general results of Cabanes and Enguehard 
\cite[Theorems 1.20(iv), 2.27]{CabanesEnguehard2004} imply that~$\mathscr{H}_n$ 
is symmetric, hence self-injective, for all non-negative integers~$n$. Our claim 
now follows from Proposition~\ref{SameBranching} and Remark~\ref{SymmetricRemark}.
\end{prf}

By \cite[Theorem 3.2]{GerberHissJacon2014}, we have that $\mathscr{H}_n$ is an
Iwahori-Hecke algebra of type~$B_n$, with parameters~$q^\delta$ and~$Q$, 
where~$Q$ occurs on the doubly laced end node of the Dynkin diagram. Although
the value of~$Q$ can only be determined explicitly in some cases, it is clear
from the proof of \cite[Theorem 3.2]{GerberHissJacon2014} that~$Q$ only depends 
on~$L$ and not on~$n$. 

\section{The unitary groups}
\label{unitarygroups}

As in Section~\ref{HCBranchingGraph}, we fix primes $p \neq \ell$ and a 
power~$q$ of~$p$. Again,~$k$ denotes an algebraically closed field of 
characteristic~$\ell$. We write~$e$ for the order of $-q$ in the finite field 
$\mathbb{F}_{\ell}$, assuming henceforth that~$e$ is odd and larger than~$1$.

Assume that~$\mathcal{D}$ is one the Dynkin types~${^2\!}A_0$ or ${^2\!}A_1$,
Thus, if~$n$ is a non-negative integer,~$G_n$ now denotes one of the groups
$\GU_{2n}(q)$ or $\GU_{2n+1}(q)$. 

We also fix a weakly cuspidal unipotent $kG_m$-module~$X$ for some non-negative
integer~$m$. For every non-negative integer $n$, we view~$X$ as a module for
$L_{m,n}$ via inflation, so that $(L_{m,n},X)$ is a weakly cuspidal pair
in~$G_{m + n}$.

Notice that we have worked with the categories of finitely generated right
$\mathscr{H}_n$-modules for the endomorphism algebras arising in the previous
section. We are now going to apply the results of 
Section~\ref{BranchingCyclotomicHecke} to these algebras, where we have 
worked with left modules. This is no loss since duality of vector spaces
induces an equivalence between $A$-mod and mod-$A$ for a finite-dimensional
$k$-algebra~$A$.

\begin{prp}
\label{HCGraphIsCrystalGraph}
Suppose that for some non-negative integer~$s$ and any non-negative integer~$n$,
$$\mathscr{H}_n := \End_{kG_{m+n}}( R_{L_{m,n}}^{G_{n+m}}( X ) )$$ is an 
Iwahori-Hecke algebra of type~$B_n$ with parameters $q^2$ and $Q = q^{2s+1}$. 

Then the Harish-Chandra branching graph $\mathcal{G}_{\mathcal{D},q,\ell}(X)$
is isomorphic to the crystal graph $\mathcal{B}_{\mathbf{s},e}$, in which the
colors of the arrows are neglected, where 
$\mathbf{s} = (s + (1-e)/2,0)$. (For the notation see 
Section~\ref{BranchingCyclotomicHecke}.2.)
\end{prp}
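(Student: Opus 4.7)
The plan is to compose the two central results already established in the paper. Proposition~\ref{IsomorphicBranching} reduces the identification of $\mathcal{G}_{\mathcal{D},q,\ell}(X)$ to that of the branching graph of the tower $(\mathscr{H}_n)_{n \geq 0}$, while Corollary~\ref{corbranchingAKug} identifies the branching graph of a tower of Ariki-Koike algebras with the crystal graph $\cB_{\bs,e}$ of a suitably charged Fock space, in Uglov's realization. So the task boils down to realizing the given Iwahori-Hecke algebras of type $B_n$ as Ariki-Koike algebras of level $d = 2$ and pinning down the charge~$\bs$.

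Using the change of generators $T_0 \mapsto -v_2 T_0'$ from Section~\ref{defAKalgebra}, presenting $\mathscr{H}_n$ as an Ariki-Koike algebra amounts to choosing $v_1, v_2 \in k$ with $u := q^2$ and $-v_1 v_2^{-1} = q^{2s+1}$. To apply Corollary~\ref{corbranchingAKug}, I also need $u$ to have order $e$ in $k$ and $v_i = u^{s_i}$ for integers~$s_i$. The hypotheses that $e$ is odd and equal to the order of $-q$ give $q^e = -1$, whence a short computation shows that $q^2$ also has order~$e$ in $k^\times$. For the charge, since $e$ is odd the identity $-q^{2s+1} = q^{e + 2s + 1}$ expresses $-q^{2s+1}$ as an even power of $q$, so the choice $v_2 := 1 = u^0$ together with $v_1 := -q^{2s+1} = q^{2s+1-e} = u^{s + (1-e)/2}$ satisfies all requirements. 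This yields the charge $\bs = (s + (1-e)/2, 0)$ claimed in the statement, and it also makes $\mathscr{H}_n$ non-semisimple by the criterion recalled in Section~\ref{defAKalgebra}.

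Composing Proposition~\ref{IsomorphicBranching} with Corollary~\ref{corbranchingAKug} applied to these data then yields the asserted isomorphism $\mathcal{G}_{\mathcal{D},q,\ell}(X) \cong \cB_{\bs,e}$ up to forgetting arrow colors. The one subtlety worth flagging, arguably the main obstacle, is that Uglov's realization genuinely depends on $\bs$ rather than only on its class modulo~$e$ (Remark~\ref{klorder}); a competing admissible choice such as $(s + (1+e)/2, 0)$ would give an \emph{a priori} different labelled crystal. This is not a real obstruction, because the underlying uncolored crystal $\cB_{\bs,e}$ is intrinsic to the tower $(\mathscr{H}_n)$ via Ariki's theorem, so any valid choice yields an isomorphic graph; the specific choice $(s + (1-e)/2, 0)$ is stated for compatibility with the conjectural matching of vertices in \cite[Conjecture~5.7]{GerberHissJacon2014}. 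Finally, the colors of the arrows must be dropped because the Harish-Chandra branching graph carries no analogue of Ariki's $i$-restriction functors.
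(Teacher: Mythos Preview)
Your proof is correct and follows essentially the same route as the paper's: identify $\mathscr{H}_n$ with the Ariki-Koike algebra $\mathcal{H}_{k,2,n}$ for $u=q^2$, $v_1=-q^{2s+1}=u^{s+(1-e)/2}$, $v_2=1$, then invoke Proposition~\ref{IsomorphicBranching} and the results of Section~\ref{branchingAK}. Your additional remarks on the order of $q^2$, the ambiguity in the choice of~$\bs$, and the reason for discarding arrow colors are accurate elaborations that the paper leaves implicit.
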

\begin{prf}
Notice that an Iwahori-Hecke algebra over $k$ of type $B_n$ with parameters
$q^2$ and $q^{2s+1}$ is equal to the Ariki-Koike algebra $\cH_n := 
\mathcal{H}_{k,2,n}$
with parameters $u = q^2$, $v_1 = -q^{2s+1}$ and $v_2 = 1$, as explained                  
in Section~\ref{defAKalgebra}. As~$q$ is a primitive $2e$th root of unity 
in~$k$, we have $v_1 = u^{s + (1-e)/2}$.

By Proposition~\ref{IsomorphicBranching}, the Harish-Chandra branching graph 
is isomorphic to the branching graph of~$\mathcal{H}_n, n\geq0$. 
The latter is isomorphic to the crystal graph $\mathcal{B}_{\mathbf{s},e}$ by 
the results of Section~\ref{BranchingCyclotomicHecke}.3.
\end{prf}

It seems reasonable to expect that the hypothesis of the above proposition is 
always satisfied. Unfortunately, a general proof of this result appears to be 
out of reach at the moment. It would, however, follow from 
\cite[Conjecture~$5.5$]{GerberHissJacon2014} in conjunction with Lemma~$5.5$ 
and Theorem~$3.2$ of \cite{GerberHissJacon2014}, at least if $\ell > 2m + 1$.
Thus, in view of Proposition~\ref{HCGraphIsCrystalGraph}, Conjecture~$5.7$ of 
\cite{GerberHissJacon2014} follows from 
\cite[Conjecture~$5.5$]{GerberHissJacon2014}, up to the labelling of the 
vertices.

A more conceptual approach to proving the latter conjecture, 
again up to the labelling of the vertices,
would be to reveal a categorification phenomenon,
in the very same spirit as \cite{Ariki1996}
for Ariki-Koike algebras, \cite{Shan2008} for cyclotomic rational Cherednik 
algebras, and \cite{BrundanKleshchev2009} for cyclotomic quiver Hecke algebras.
In our case, it is particularly crucial to understand how a coloring
of the arrows in the Harish-Chandra branching graph would arise,
and, in turn, give an interpretation of the rest of the abstract crystal data
(namely the weight and the functions $\varphi_i$ and $\eps_i$ 
in Kashiwara's notation \cite[Section 7]{Kashiwara1995}).
Actually, according to Uglov's work \cite{Uglov1999}, the level $2$ Fock space 
$\cF_{\bs,e}$ can be seen as a submodule of the integrable 
$\cU_v'(\widehat{\mathfrak{sl}_e})$-module $\La^{s+(1-e)/2}$ consisting of
semi-infinite wedge products. Besides, there is an integrable action of 
level~$e$ of $\cU_{v^{-1}}'(\widehat{\mathfrak{sl}_2})$ on $\La^{s+(1-e)/2}$,
as well as an action of a Heisenberg algebra.
These three actions pairwise commute, and each element of $\La^{s+(1-e)/2}$ is 
obtained by acting on some very particular elements (\cite[Theorem 4.8]{Uglov1999}).
It would be interesting to use this approach to the Fock space and to look for 
categorical actions  of these algebras (in the sense of \cite{ChuangRouquier2008}) 
in the context of $kG_n$-modules which have a 
filtration by unipotent modules. 
Note that a categorification 
of the action of the Heisenberg algebra is achieved in \cite{ShanVasserot2012}.

\begin{prp}
\label{SimpleSocle}
Let the notation and assumptions be as in Proposition~\ref{HCGraphIsCrystalGraph}.
Put $G := G_{m+n}$ and $L := L_{m,n}$.
Let~$S$ be a simple $kG$-module in the head (or socle) of $R_L^G( X )$. Let
$M := L_{m+n - 1, 1}$ be the maximal pure Levi subgroup of~$G$. Suppose that
$${^*\!R}_M^G( S ) = S_1 \oplus \cdots \oplus S_r$$
with indecomposable $kM$-modules $S_i$, $1 \leq i \leq r$. Then the socle of 
each~$S_i$ is simple and isomorphic to its head. 
\end{prp}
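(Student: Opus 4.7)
The plan is to transport the problem, via the categorical equivalence furnished by $F_Y$, to a statement about the restriction of simple modules in the Ariki-Koike algebra $\mathscr{H}_n = \cH_{k,2,n}$, and then invoke the Grojnowski--Vazirani theorem.

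First, I would identify $H_M = \End_{kM}(R_L^M(X))$ with $\mathscr{H}_{n-1}$. Writing $M = G_{m+n-1}\times\GL_1(q^2)$ and noting that $L = L_{m,n}$ sits inside $M$ as $L_{m,n-1}\times\GL_1(q^2)$, Harish-Chandra induction factors through the product to give $R_L^M(X) \cong R_{L_{m,n-1}}^{G_{m+n-1}}(X)\boxtimes\mathbf{1}$, hence $H_M\cong\mathscr{H}_{n-1}$. Under the hypothesis of Proposition~\ref{HCGraphIsCrystalGraph}, this is an Iwahori-Hecke algebra of type~$B_{n-1}$ with the same parameters $q^2, q^{2s+1}$ as $\mathscr{H}_n$, and the natural embedding $i:H_M\hookrightarrow H_G$ of Section~\ref{BranchingEndomorphism} is the standard inclusion of the parabolic subalgebra generated by $T_0,T_1,\ldots,T_{n-2}$; equivalently, it is the standard inclusion $\cH_{k,2,n-1}\hookrightarrow\cH_{k,2,n}$ of Ariki-Koike algebras.

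Next, applying Proposition~\ref{HCRestricitonAndRestriction} to $Z := R_L^M(X)$ and the simple module $S$, I obtain a natural isomorphism of right $\mathscr{H}_{n-1}$-modules
\[
F_Z({}^*R_M^G(S)) \;\cong\; \Res^{\mathscr{H}_n}_{\mathscr{H}_{n-1}}(F_Y(S)).
\]
By Remark~\ref{SymmetricRemark}, $S\in kG\mbox{\rm -mod}_Y$, and by Proposition~\ref{IsomorphicBranching}, $F_Y(S)$ is a simple $\mathscr{H}_n$-module; by Lemma~\ref{modYstability}, ${}^*R_M^G(S)\in kM\mbox{\rm -mod}_Z$. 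Since $\mathscr{H}_{n-1}$ is self-injective, Cabanes' theorem \cite[Theorem~2]{Cabanes1990} makes $F_Z$ an equivalence between $kM\mbox{\rm -mod}_Z$ and $\mbox{\rm mod-}\mathscr{H}_{n-1}$, and such an equivalence preserves indecomposable decompositions, simple socles, simple heads, and isomorphism types.

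The key input is then the theorem of Grojnowski and Vazirani \cite[Theorem~B]{GrojnowskiVazirani2001}, applied to the simple $\mathscr{H}_n$-module $F_Y(S)$: it asserts that $\Res^{\mathscr{H}_n}_{\mathscr{H}_{n-1}}(F_Y(S))$ decomposes as a direct sum of indecomposable submodules, one for each residue $i\in\mathbb{Z}/e\mathbb{Z}$ with non-zero $i$-restriction, each having a simple socle and a simple head both isomorphic to $\tilde{e}_i(F_Y(S))$. Importantly, this result holds in arbitrary characteristic of $k$. Transporting this decomposition back through the equivalence $F_Z$ yields precisely the decomposition ${}^*R_M^G(S) = S_1\oplus\cdots\oplus S_r$ together with the claimed structure of each summand. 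The main obstacle is the bookkeeping of the previous paragraph: one must verify that the embedding $i$ of Section~\ref{BranchingEndomorphism}, a priori only an abstract embedding of $k$-algebras, genuinely matches the standard parabolic inclusion of type $B_{n-1}$ in type $B_n$, so that the restriction functor appearing in Proposition~\ref{HCRestricitonAndRestriction} coincides with the Ariki-Koike restriction to which Grojnowski--Vazirani applies. Once this identification is in place, the argument is a direct translation.
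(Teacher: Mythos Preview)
Your proposal is correct and follows essentially the same approach as the paper: Lemma~\ref{modYstability}, Proposition~\ref{HCRestricitonAndRestriction}, Cabanes' equivalence \cite[Theorem~2]{Cabanes1990}, and Grojnowski--Vazirani \cite[Theorem~B]{GrojnowskiVazirani2001} are exactly the ingredients the paper cites in its one-sentence proof. You have simply spelled out the identification $H_M \cong \mathscr{H}_{n-1}$ and the compatibility of the embedding with the standard parabolic inclusion, both of which the paper leaves implicit.
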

\begin{prf}
Lemma~\ref{modYstability} implies that each direct summand~$S_i$ is contained 
in $kM\mbox{\rm -mod}_Z$ with $Z = R_L^M( X )$. The assertion now follows from
the corresponding properties of the simple $\mathcal{H}_n$-modules discussed in
Section \ref{branchingAK} (more precisely \cite[Theorem B]{GrojnowskiVazirani2001}),
together with Proposition~\ref{HCRestricitonAndRestriction} and 
\cite[Theorem 2]{Cabanes1990}).
\end{prf}

It follows from Proposition~\ref{HCGraphIsCrystalGraph} that the modules in a
weak Harish-Chandra series can be labelled by Uglov bipartitions, provided the
hypothesis of the proposition is satisfied. The unipotent $kG$-modules of
$G := \GU_r(q)$ are also labelled by partitions of~$r$, as explained in
\cite[Section 5.3]{GerberHissJacon2014}. Let~$\nu$ be a partition of~$r$. We then 
write~$X_\nu$ for the unipotent $kG$-module labelled by~$\nu$. We will now
discuss the question of matching the two labellings in special cases. 

We first give an example illustrating why we want to consider Uglov's crystal 
structure rather than Kleshchev's. In fact, Kleshchev's realization gives rise 
to bipartitions that do not naturally appear as twisted $2$-quotients of the 
labels of the vertices in the Harish-Chandra branching graph.
Recall the combinatorial notions introduced in
\cite[Section 5.2]{GerberHissJacon2014}. For a partition
$\la=(\la_1, \la_2, \dots)$ with $\lambda_1 \geq \lambda_2 \geq \cdots$, let
$\De_t := (t,t-1,\dots, 1) = \la_{(2)}$ be the $2$-core of $\la$, and
$(\la^1,\la^2)=\la^{(2)}$ the $2$-quotient of $\la$. The \textit{twisted
$2$-quotient} of $\la$ is the bipartition $\overline{\la}^{(2)}$ defined by
$$\overline{\la}^{(2)} = \left\{
\begin{array}{ll}
(\la^1,\la^2) & \text{ if $t$ is even} \\
(\la^2,\la^1) & \text{ if $t$ is odd}.
\end{array}
\right.$$

\newcolumntype{M}[1]{>{\centering}p{#1}}

\begin{center}
 
\begin{tabular}{M{5.5cm}|M{5.5cm}}
${\small
\begin{xy}
\xymatrix@C-30pt{
& & & (\emptyset) \ar[dl] \ar[dr] & & \\ 
& & (2) \ar[dl] \ar[d] & &  (1^2) \ar[dl] \ar[dr]  & \\
& (4) \ar[dl] \ar[d] & (2.1^2)  \ar[d] & (2^2) \ar[d] \ar[dr] & & (3.1) \ar[d] \\
(6) & (4.2) & (4.1^2) & (3^2) & (2^2.1^2) & (5.1)
}
\end{xy}
}$

&

$
{\small
\begin{xy}
\xymatrix@C-31pt{
& & & (\emptyset,\emptyset) \ar[dl] \ar[dr] & & \\ 
& & (1,\emptyset) \ar[dl] \ar[d] & &  (\emptyset,1) \ar[dl] \ar[dr]  & \\
& (2,\emptyset) \ar[dl] \ar[d] & (1^2,\emptyset)  \ar[d] & (1,1) \ar[d] \ar[dr] & & (\emptyset,2) \ar[d] \\
(3,\emptyset) & (2,1) & (2.1,\emptyset) & (1,2) & (1,1^2) & (\emptyset,3)
}
\end{xy}
}
$

\tabularnewline

The connected component of the Harish-Chandra branching graph giving the 
principal series for $\GU_r(q)$, $0 \leq r \leq 6$ even, $e=3$, 
under the identification $X_\la\leftrightarrow \la$.

&

The connected component of the Harish-Chandra branching graph giving the 
principal series for $\GU_r(q)$, $0 \leq r \leq 6$ even, $e=3$,
under the identification $X_\la\leftrightarrow \overline{\la}^{(2)}$.

\end{tabular}

\begin{tabular}{M{5.5cm}|M{5.5cm}}
$
{\small
\begin{xy}
\xymatrix@C-31pt{
& & & (\emptyset,\emptyset) \ar[dl] \ar[dr] & & \\ 
& & (1,\emptyset) \ar[dl] \ar[d] & &  (\emptyset,1) \ar[dl] \ar[dr]  & \\
& (2,\emptyset) \ar[dl] \ar[d] & (1^2,\emptyset)  \ar[d] & (1,1) \ar[d] \ar[dr] & & (\emptyset,2) \ar[d] \\
(3,\emptyset) & (2,1) & (2.1,\emptyset) & (1,2) & (1,1^2) & (\emptyset,3)
}
\end{xy}
}
$

&

$
{\small
\begin{xy}
\xymatrix@C-31pt{
& & & (\emptyset,\emptyset) \ar[dl] \ar[dr] & & \\ 
& & (1,\emptyset) \ar[dl] \ar[d] & &  (\emptyset,1) \ar[dl] \ar[dr]  & \\
& (2,\emptyset) \ar[dl] \ar[d] & (1^2,\emptyset)  \ar[d] & (1,1) \ar[d] \ar[dr] & & (\emptyset,2) \ar[d] \\
(3,\emptyset) & (2,1) & (2.1,\emptyset) & \boldsymbol{(1^2,1)} & (1,1^2) & \boldsymbol{(1,2)}
}
\end{xy}
}$

\tabularnewline

The crystal graph $\cB_{\bs,e}$ in Uglov's realization, for $\bs=(-1,0), 
e=3$, up to rank $3$.

&
The crystal graph $\cB_{\bs,e}$ in Kleshchev's realization, for $\bs=(-1,0), 
e=3$, up to rank $3$.

\end{tabular}
\end{center}
The first graph can be derived from the decomposition matrices computed by
Dudas and Malle in \cite{DudasMalle2013}. Notice that the results of Dudas and 
Malle require the condition $\ell > 6$, but $e = 3$ implies $\ell \geq 7$ 
as $2e \mid \ell - 1$. 
The third graph agrees with the second graph (in particular the labels of the 
vertices match). However, Uglov's and Kleshchev's realizations of $\cB_{\bs,e}$ 
already differ in rank~$3$, as illustrated in the fourth graph.
The differences with the Uglov crystal are indicated in boldface type.
Such an example had first been found by Gunter Malle and the second author.

Let us proceed to the main result of this section.
Let $\la=(\la_1, \la_2, \dots)$ with $\lambda_1 \geq \lambda_2 \geq \cdots$
and $2$-core $\De_t := (t,t-1,\dots, 1)$.
Now, let $n(\la) = \sum_{i\geq 1} (i-1)\la_i.$ Further, write 
$\bla=\overline{\la}^{(2)}$, $\bc=(t,0)$ and set 
$\bn(\bla) = \sum_{j\geq1} (j-1) (\ka_j - \ka_j^0),$
where $\ka_j$ (respectively $\ka_j^0$) denote the elements of the symbol 
$\fB(\bla,\bc)$ (respectively $\fB(\bemptyset,\bc)$) in decreasing order, 
see \cite[Section 2.2]{JaconLecouvey2012} for the notation.

\begin{rem}\label{remafunction}
Lusztig's $\ba$-function for the Iwahori-Hecke algebra of type $B_n$
with parameters $q^2$ and $q^{2t+1}$ is the function $\ba:=\ba^\bm$ of 
Section \ref{uglov} with $\bm=(t,0)$, and is given by the formula 
$\ba(\bmu) = 2 \bn(\bmu)$, for all $\bmu\vdash_2 n$. This follows from 
\cite[Proposition 5.5.11, Example 5.5.14 and Example 1.3.9]{GeckJacon2011}.
\end{rem}

\begin{lem}\label{lemafunction}
For all partition $\la$, we have $n(\la) = \ba(\bla)$.
\end{lem}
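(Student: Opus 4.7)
My plan is to translate both $n(\la)$ and $\bn(\bla)$ into data read off the $2$-abacus of $\la$ and compare them in that common setting. By Remark~\ref{remafunction} it suffices to prove $n(\la)=2\bn(\bla)$.

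First I would realise $\la$ on a $2$-abacus with $N$ beads, choosing $N$ of appropriate parity so that the numbers of beads on the even and odd runners differ by exactly~$t$. The beta numbers $\beta_i=\la_i+N-i$ split by parity into the two runners; after the rescaling $\beta\mapsto\lfloor\beta/2\rfloor$, each runner becomes a beta set for one component of the $2$-quotient $\la^{(2)}$. The twist in the definition of $\overline{\la}^{(2)}$ is precisely what aligns these two sets with the two rows of the symbol $\fB(\overline{\la}^{(2)},\bc)$ at charge $\bc=(t,0)$ in the convention of \cite{JaconLecouvey2012}. Under this identification the multiset $\{\ka_j\}$ of sorted symbol entries becomes $\{\lfloor\beta_i/2\rfloor\}_{i=1}^N$, and likewise $\{\ka_j^0\}=\{\lfloor\beta_i^\emptyset/2\rfloor\}$, where $\beta_i^\emptyset$ are the beta numbers of the unique partition with $2$-core $\De_t$ and empty $2$-quotient.

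Next, I would use the elementary identity $n(\la)=\sum_{i=1}^N(i-1)(\beta_i-\beta_i^\emptyset)$ (an immediate expansion of $\la_i=\beta_i-(N-i)$) and compare it with the defining formula $\bn(\bla)=\sum_{j=1}^N(j-1)(\ka_j-\ka_j^0)$. Substituting $\beta=2\ka+\varepsilon$, with $\varepsilon\in\{0,1\}$ the parity indicator, produces $n(\la)=2\bn(\bla)+R$, where $R=\sum(i-1)(\varepsilon_i-\varepsilon_i^\emptyset)$ is a residual. The key point is that the multiset of parities attached to a decreasingly sorted $\beta$-sequence depends only on the runner sizes, and hence only on the $2$-core: so $R=0$ as soon as one verifies that the canonical bijection between $\{\beta_i\}$ and $\{\beta_i^\emptyset\}$ (matched by their runner) respects the sorting orders.

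The main obstacle is precisely this bookkeeping: two symbol entries $\ka_j$ coming from distinct rows may be equal while the corresponding $\beta$'s differ by one, so the bijection between the sorted $\beta$-sequence and the sorted $\ka$-sequence requires careful tie-breaking. The cleanest way around this is to recast both quantities as sums of the form $\sum\binom{\beta_i}{2}-\sum\binom{\beta_i^\emptyset}{2}$, which are symmetric in the $\beta$'s and hence ordering-independent; the expansion
\[
\binom{2\ka+\varepsilon}{2}=2\binom{\ka}{2}+\varepsilon\ka+\binom{\varepsilon}{2}
\]
then exhibits the factor $2$ on the right-hand side, and the $\varepsilon$-linear and $\varepsilon$-quadratic terms cancel between $\la$ and $\emptyset$ because the relevant sums depend only on the runner occupancies of the abacus, which are the same for both. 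Invoking Remark~\ref{remafunction} to identify $2\bn(\bla)$ with $\ba(\bla)$ then finishes the proof.
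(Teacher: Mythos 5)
Your reduction to $n(\la)=2\bn(\bla)$ via Remark~\ref{remafunction} is the same first step as in the paper, but the abacus computation you base on it contains two concrete errors. First, the ``elementary identity'' $n(\la)=\sum_{i=1}^N(i-1)(\beta_i-\beta_i^\emptyset)$ is false whenever $t\geq 2$: since $\beta_i-\beta_i^\emptyset=\la_i-(\De_t)_i$, the right-hand side equals $n(\la)-n(\De_t)$, and $n(\De_t)>0$ for $t\geq 2$ (e.g.\ $n(\De_2)=1$), so your formula drops the core contribution. Second, and more seriously, the residual $R=\sum_i(i-1)(\eps_i-\eps_i^\emptyset)$ does not vanish, and no tie-breaking or symmetrisation can make it vanish: with your identification the entries $\ka_j$ are integers, so $2\bn(\bla)$ is even, while $n(\la)$ is odd already for $\la=(1^2)$ (empty $2$-core, so the first issue is not even in play). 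Concretely, for $\la=(3,1)$, $t=0$, $N=4$ one has $\beta=(6,3,1,0)$ with parity word $(0,1,1,0)$, whereas $\beta^\emptyset=(3,2,1,0)$ has parity word $(1,0,1,0)$: the \emph{multiset} of parities is determined by the runner occupancies, as you say, but its positional distribution along the decreasingly sorted $\beta$-sequence is not determined by the $2$-core, and here $R=1\neq 0$. Your proposed repair via binomial coefficients does not help either, because $\sum_i(i-1)\beta_i$ is not equal to $\sum_i\binom{\beta_i}{2}$ up to reference subtraction (for $\la=(1)$, $N=2$, the reference-subtracted quantities are $0$ and $1$ respectively); the correct ordering-free rewriting would be $\sum_i(i-1)\beta_i=\sum_{i<j}\min(\beta_i,\beta_j)$, but that only shows $R$ is an honest invariant of the beta-set, not that it is zero.

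The root of the problem is that you have \emph{guessed} that the symbol $\fB(\bla,\bc)$ of \cite{JaconLecouvey2012} consists of the halved beta-numbers $\lfloor\beta_i/2\rfloor$ at charge $\bc=(t,0)$; the parity obstruction above shows that this identification is incompatible with the identity you are trying to prove, so the conventions must be pinned down rather than assumed. This is precisely where the paper's proof has its content: it invokes the inverse construction $\Phi_t$ of the twisted $2$-quotient from \cite[Section~7.2]{GerberHissJacon2014}, which is set up via the symbol $\fB(\bla,\bc)$ in such a way that the \emph{parts of $\la=\Phi_t(\bla)$ themselves} are exactly the numbers $2(\ka_j-\ka_j^0)$ --- the core contribution and the interleaving of the two rows are already absorbed into that construction --- after which $n(\la)=2\bn(\bla)=\ba(\bla)$ is immediate from Remark~\ref{remafunction}. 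An abacus argument like yours could in principle be carried out, but only by first proving that statement for the actual symbols of \cite{JaconLecouvey2012}; your sketch replaces that step by the two incorrect identities above, so as written it does not give a proof.
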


\begin{proof}
The reverse combinatorial procedure to taking the twisted $2$-quotient is
explained in \cite[Section 7.2]{GerberHissJacon2014}. Accordingly, for fixed 
$t\in\N$, we denote $\Phi_t(\bla)$ the (unique) partition such that 
$\Phi_t(\bla)_{(2)}=\De_t$ and $\overline{\Phi_t(\bla)}^{(2)}=\bla$.
Therefore, we write $\la=\Phi_t(\bla)$. We remark that the construction of 
$\Phi_t(\bla)$ is done via the symbol $\fB(\bla,\bc)$, so that the parts of 
$\Phi_t(\bla)=\la$ can be read off $\fB(\bla,\bc)$: they are precisely the 
integers $2(\ka_j - \ka_j^0)$. This, together with Remark \ref{remafunction}, 
proves the claim.
\end{proof}

We are now ready to prove 
those parts of \cite[Conjecture~5.7]{GerberHissJacon2014}
which concern the Harish-Chandra series arising from cuspidal modules lifting
to cuspidal unipotent defect~$0$ modules. By formula \cite[(8.5)]{FongSrinivasan1982} 
of Fong and Srinivasan, an ordinary unipotent module labelled by the
partition~$\lambda$ has defect~$0$ if and only if~$\lambda$ is an $e$-core.
Our result generalizes \cite[Theorem~$5.4$]{GeckJacon2006} in level~$2$ as well as
\cite[Theorem~6.2]{GerberHissJacon2014}. The proof is inspired by the considerations 
in \cite[2.5]{Geck2006}. Remarkably, there is no restriction on~$\ell$.

\begin{thm}
\label{MainTheoremUnitaryGroups}
Let $0 \leq s < (e - 1)/2$ be an integer, put $m := \lfloor s(s+1)/2 \rfloor$,
and let~$X$ denote the unipotent cuspidal $kG_m$-module labelled by $\Delta_s$.
Then under the identification $X_{\la} \leftrightarrow \overline{\la}^{(2)}$,
the Harish-Chandra branching graph $\mathcal{G}_{\mathcal{D},q,\ell}( X )$
is exactly the crystal graph $\mathcal{B}_{\mathbf{s},e}$
in Uglov's realization, with $\bs = ( s + (1-e)/2, 0 )$.
\end{thm}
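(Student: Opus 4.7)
\begin{prf}
The plan is to separate the statement into (i) a graph-level isomorphism $\mathcal{G}_{\mathcal{D},q,\ell}(X) \cong \mathcal{B}_{\mathbf{s},e}$ (forgetting labels), which follows essentially formally from the earlier sections, and (ii) the identification of the vertex labelling via $X_\lambda \leftrightarrow \overline{\lambda}^{(2)}$, which is the genuinely arithmetic part.

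For (i), I would apply Proposition~\ref{HCGraphIsCrystalGraph}, whose hypothesis requires $\mathscr{H}_n$ to be an Iwahori-Hecke algebra of type $B_n$ with parameters $q^2$ and $q^{2s+1}$. Because $2s + 1 < e$, the partition $\Delta_s$ is already an $e$-core and~$X$ lifts to an ordinary cuspidal unipotent character of defect~$0$; for such a cuspidal the parameters of the endomorphism algebra of Harish-Chandra induction in characteristic zero are given explicitly by Lusztig's theory, and \cite[Theorem~$3.2$]{GerberHissJacon2014} transfers this identification to the modular algebra $\mathscr{H}_n$, delivering $Q = q^{2s+1}$. Proposition~\ref{HCGraphIsCrystalGraph} then yields the graph isomorphism with $\mathbf{s} = (s + (1-e)/2, 0)$.

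For (ii), the strategy follows \cite[2.5]{Geck2006}: compare the $\mathbf{a}$-function orderings on both sides. Via the Hom-functor $F_n$ of Proposition~\ref{IsomorphicBranching} together with Corollary~\ref{corbranchingAKug}, the simple modules in the weak Harish-Chandra series of~$X$ inherit a labelling by $\mathcal{U}_{\mathbf{s},e}$. On the Hecke side, the choice $\mathbf{m} = (s,0)$ satisfies the constraint of Theorem~\ref{thmcbs}, and Remark~\ref{remafunction} gives $\mathbf{a}^\mathbf{m}(\bmu) = 2\bn(\bmu)$. On the group side, Lusztig's $a$-invariant of the ordinary unipotent character labelled by $\lambda$ equals $2n(\lambda)$ up to an additive constant determined solely by the cuspidal support~$\Delta_s$. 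Lemma~\ref{lemafunction} identifies these two $\mathbf{a}$-functions precisely under $\lambda \leftrightarrow \overline{\lambda}^{(2)}$. Combined with the general unitriangularity of the decomposition matrices of unipotent blocks (and of $\mathscr{H}_n$ via Theorem~\ref{thmcbs} and Proposition~\ref{propcbs}), and the fact that the Hom-functor identifies the two matrices, this unitriangular matching forces $X_\lambda \leftrightarrow \overline{\lambda}^{(2)}$.

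The main obstacle is step~(ii) in arbitrary characteristic. Theorem~\ref{thmcbs} and Proposition~\ref{propcbs} are stated in characteristic zero, so the unitriangularity on the Hecke side in characteristic~$\ell$ has to be obtained via the decomposition map from characteristic zero; the key point is that only the $a$-function ordering, not the decomposition numbers themselves, enters the argument, which explains why no condition on~$\ell$ is needed. Step~(i) is comparatively routine since, for defect-$0$ cuspidals, the endomorphism-algebra parameters are determined already in characteristic zero.
\end{prf}
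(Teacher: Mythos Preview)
Your two-step plan is the paper's plan, and step~(i) is fine: the defect-$0$ lift of~$X$ and the known characteristic-zero parameters of the endomorphism algebra give $Q=q^{2s+1}$, so Proposition~\ref{HCGraphIsCrystalGraph} applies.

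The gap is in step~(ii), exactly where you flag the difficulty. You assert that ``the Hom-functor identifies the two matrices'' and then appeal to Theorem~\ref{thmcbs} and Proposition~\ref{propcbs} on the Hecke side, hoping to transport unitriangularity to characteristic~$\ell$ by a decomposition-map argument. Two things are missing. First, there is no general reason the Hom-functor $F_k$ should send the $kG$-decomposition numbers to the $\mathscr{H}_n$-decomposition numbers; this is precisely what the paper extracts from the defect-$0$ hypothesis. Since $\Delta_s$ is an $e$-core, $X$ and its $\mathcal{O}$-lattice $\hat X$ are \emph{projective}, so $R_L^G(\hat X)$ is a direct sum of indecomposable projective $\mathcal{O}G$-lattices $\hat X_i$, and the numbers $d_{\nu,\mu}=[K\otimes\hat X_i:Y_\nu]$ are honest decomposition numbers. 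Applying $F_R$ for $R\in\{K,\mathcal O,k\}$ and Brauer reciprocity for $(H_K,H_{\mathcal O},H_k)$ then gives $d_{\nu,\mu}=[S_k^{\bnu}:M^{\bmu}]$ directly in characteristic~$\ell$. Without projectivity this identification has no content. Second, you still need ``canonical basic set $=$ Uglov'' over~$k$, and neither Theorem~\ref{thmcbs} nor Proposition~\ref{propcbs} gives that; the paper instead uses Geck's dominance-triangularity \cite{Geck1991} together with Lemma~\ref{lemafunction} to show that $\bar\Lambda^0=\{\bar\mu^{(2)}\}$ \emph{is} a canonical basic set for $H_k(L,X)$, and then invokes \cite[Lemma~5.2, Example~5.6]{GeckJacon2006}, a characteristic-free identification, to conclude $\bar\Lambda^0=\cU_{\bs,e}$.

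In short: your outline is right, but the phrase ``via the decomposition map from characteristic zero'' hides the one genuine idea, namely that defect~$0$ makes $R_L^G(X)$ projective and hence makes the Hom-functor compatible with decomposition numbers on both sides; and the final identification with Uglov should go through \cite{GeckJacon2006} rather than the characteristic-zero Theorem~\ref{thmcbs}/Proposition~\ref{propcbs}.
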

\begin{proof}
Fix a non-negative integer~$n$, put $L := L_{m,n}$ and $G := G_{m+n}$. Let
$r := 2(m+n) + \iota$ with $\iota \in \{0,1\}$ such that $2m + \iota = s(s+1)/2$
and $G = G_{m+n} = \GU_r(q)$. 
Choose an $\ell$-modular system $(K, \mathcal{O}, k)$ such that~$K$ is large 
enough for~$G$. By our assumption on~$s$, the triangular partition $\Delta_s$
is an $e$-core. Hence the cuspidal unipotent $KL$-module~$Y$ labelled 
by~$\Delta_s$ is of $\ell$-defect~$0$,
and thus reduces irreducibly to the $kL$-module~$X$. In particular,~$X$ is 
cuspidal.

Let $\hat{X}$ denote an $\mathcal{O}L$-lattice in~$Y$. Then~$X$ and~$\hat{X}$ 
are projective, as~$Y$ is of defect~$0$. It follows that $R_L^G( \hat{X} )$ is 
projective. Write 
$$R_L^G( X ) = X_1 \oplus X_2 \oplus \cdots \oplus X_h$$
with projective indecomposable $kG$-modules~$X_i$, $i = 1, \dots , h$. Then
$$R_L^G( \hat{X} ) = \hat{X}_1 \oplus \hat{X}_2 \oplus \cdots \oplus \hat{X}_h$$
with indecomposable projective $\mathcal{O}G$-lattices~$\hat{X}_i$ 
lifting~$X_i$, $i = 1, \ldots , h$. For each $i = 1 , \ldots , h$, the 
irreducible constituents of $K \otimes_\mathcal{O} \hat{X}_i$ are unipotent and
thus of the form $Y_\nu$ for partitions~$\nu$ of~$r$ with $2$-core~$\Delta_s$, 
as they lie in the ordinary Harish-Chandra series determined by~$(L,Y)$.

Now let $\leq$ denote the lexicographic order on the set of partitions of~$r$.
For $i = 1, \ldots , h$ let 
$$\mu(i) := 
\max\{ \nu \mid [ K \otimes_{\mathcal{O}} \hat{X}_i \colon\!Y_\nu ] \neq 0 \},$$
and put 
$$\Lambda^0 := \{ \mu(i) \mid 1 \leq i \leq h \}.$$
Let $1 \leq i, j \leq h$. Geck's result \cite{Geck1991} on the triangular shape 
of the $\ell$-modular decomposition matrix of~$\GU_r(q)$ implies that $\mu(i) = 
\mu(j)$ if and only if $X_i \cong X_j$. For each $\mu \in \Lambda^0$ choose~$i$ 
with $1 \leq i \leq h$ and $\mu = \mu(i)$ and write~$X_\mu$ for the simple head
composition factor of~$X_i$. Then the set $\{ X_\mu \mid \mu \in \Lambda^0 \}$ 
equals the Harish-Chandra series determined by $(L,X)$. 
As the elements of~$\Lambda^0$ are partitions of~$r$ with $2$-core $\Delta_s$, 
the set 
$$\bar{\Lambda}^0 := \{ \bar{\mu}^{(2)} \mid \mu \in \Lambda^0 \}$$ 
consists of bipartitions of~$n$.

For $R \in \{ K, \mathcal{O}, k \}$ put
$$H_R(L, R \otimes_{\mathcal{O}} \hat{X} ) :=
\End_{RG}( R_L^G( R \otimes_{\mathcal{O}} \hat{X} ) ).$$
Then $H_R(L, R \otimes_{\mathcal{O}} \hat{X} )$ is an Iwahori-Hecke
algebra over~$R$ of type~$B_n$ with parameters~$q^2$ and $q^{2s+1}$, viewed as
elements of~$R$ (see \cite[p.~$464$]{C}). We denote the Hom-functor with respect 
to~$R_L^G( R \otimes_{\mathcal{O}} \hat{X} )$ by~$F_R$.

For a bipartition~${\bnu}$ of~$n$ and $R \in \{K, k \}$, let $S^{\bnu}_R$
denote the Specht module of $H_R( L, R \otimes_{\mathcal{O}} \hat{X} )$
associated to~${\bnu}$ by Dipper James and Murphy \cite[Theorem~$4.22$]{DiJaMu} 
(where we follow \cite{GeckJacon2011} in our notational convention).
By the results of Fong and Srinivasan in the appendix of 
\cite{FongSrinivasan1990}, we have $F_K( Y_\nu ) = S_K^{{\bnu}}$
with ${\bnu} = \bar{\nu}^{(2)}$ for all $\nu \vdash r$ with $2$-core~$\Delta_s$.
For $\mu \in \Lambda^0$ and ${\bmu} := \bar{\mu}^{(2)}$ put $M^{\bmu} := 
F_k( X_\mu )$. Then $\{ M^{\bmu} \mid {\bmu} \in \bar{\Lambda}^0 \}$ is a set
of representatives for the simple $H_k( L, X )$-modules.

We claim that $\bar{\Lambda}^0$ is a canonical basic set for $H_k( L, X )$
as defined in~\cite[Definition~$3.2.1$]{GeckJacon2011} or 
\cite[Definition~$2.4$]{GeckJacon2006}. 
For a partition~$\nu$ of~$r$ with
$2$-core~$\Delta_s$ and
$\mu  = \mu(i) \in \Lambda^0$,  where $1 \leq i \leq h$, put 
$$d_{\nu,\mu} := [ K \otimes_{\mathcal{O}} \hat{X}_i\colon\!Y_\nu ].$$
By the result of Geck \cite{Geck1991}, $d_{\nu,\mu} \neq 0$ implies that 
either $\nu = \mu$ and $d_{\mu,\mu} = 1$, or else that~$\nu$ is strictly 
smaller than~$\mu$
in the dominance order on partitions. The latter implies that
$n( \mu ) < n( \nu )$ (see e.g.\ \cite[Exercise~$5.6$]{GeckPfeiffer2000}). 
Thus $d_{\nu,\mu} \neq 0$ implies that $\nu = \mu$ or $n(\mu) < n(\nu)$.
Now
$$d_{\nu,\mu} = [ K \otimes_{\mathcal{O}} \hat{X}_i\colon\!Y_\nu ]
= [ F_K( K \otimes_{\mathcal{O}} \hat{X}_i )\colon\!F_K( Y_\nu ) ]
= [ S_k^{\bnu}\colon\!M^{\bmu} ]$$
by Brauer reciprocity applied to $(H_K( L, Y ), H_{\mathcal{O}}( L, \hat{X} ),
H_k( L, X ) )$, as $F_K( Y_\nu ) = S_K^{\bnu}$ and 
$F_{\mathcal{O}}( \hat{X}_i )$ is the lift of the projective cover of
$F_k( X_{\mu} ) = M^{\bmu}$. 
Now $n( \nu ) = \mathbf{a}({\bnu})$ by Lemma \ref{lemafunction}.
Thus $[ S_k^{\bmu}\colon\!M^{\bmu} ] = 1$
and $[ S_k^{\bnu}\colon\!M^{\bmu} ] \neq 0$ implies that ${\bnu} = {\bmu}$
or $\mathbf{a}({\bmu}) < \mathbf{a}({\bnu})$. Hence $\bar{\Lambda}^0$ is a
canonical basic set for $H_k(L,X)$ as claimed.

It now follows from
\cite[Lemma~$5.2$ and Example~$5.6$]{GeckJacon2006} that $\bar{\Lambda}^0$ 
equals the set $\cU_{\bs,e}$ with $\bs = (s + (1-e)/2,0)$, thus proving our 
assertion.
\end{proof}

\section*{Acknowledgements}

It is our pleasure to thank Olivier Dudas, Meinolf Geck, Nicolas Jacon
and Peng Shan for many useful discussions on the results of this paper.



\end{document}